\newtheorem{theorem}{Theorem}
\newtheorem{corollary}[theorem]{Corollary}
\newtheorem{definition}[theorem]{Definition}
\newtheorem{example}[theorem]{Example}
\newtheorem{proposition}[theorem]{Proposition}
\newtheorem{remark}[theorem]{Remark}
\newcommand{\qed}{\hfill \rule{0.5em}{0.5em}}
\newcommand{\qeda}{\rule{0.5em}{0.5em}}
\newenvironment{proof}[1][Proof]{\noindent\textbf{#1.} }{\hfill \rule{0.5em}{0.5em}}
\newenvironment{proof1}[1][Proof]{\noindent\textbf{#1.} }{}
\newcommand{\delete}[1]{}
\newcommand{\text}[1]{#1}
\begin{document}


\vspace*{-5cm}
\parbox[t]{\textwidth}
{
\title{Colourings and the Alexander Polynomial}
\author{Lu\'{\i}s Camacho\thanks{lcamacho@uma.pt}\\ SQIG - Instituto de Telecomunicações and \\ Department of Mathematics and Engineering,\\ University of Madeira\\ Funchal, Madeira, Portugal
\and F. Miguel Dion\'{\i}sio\thanks{fmd@math.ist.utl.pt}\\ SQIG - Instituto de Telecomunica\c{c}\~{o}es and \\  Department of Mathematics,\\ Instituto Superior Técnico \\ Lisbon, Portugal
\and Roger Picken\thanks{ rpicken@math.ist.utl.pt}\\ Center for Mathematical Analysis, \\ Geometry and Dynamical Systems and \\ Department of Mathematics,\\ Instituto Superior Técnico\\ Lisbon, Portugal}

\date{}
\maketitle
}
\begin{abstract}
In this paper we look for closed expressions to calculate the number of colourings of 
prime knots
 for  given linear Alexander quandles.
For this purpose the colouring matrices are simplified to a triangular form,
when possible. The operations used to perform this triangularization preserve
the property that the entries in each row add up to zero,  thereby simplifying the solution
of the equations giving the number of colourings. When the colouring matrices
(of prime knots up to ten crossings)  can be triangularized, closed expressions giving the 
number of colourings can be obtained in a straightforward way. 
We use these results to show that there are colouring matrices that cannot be 
triangularized. In the case of knots with triangularizable colouring matrices 
  we present a way to find linear Alexander quandles that distinguish
by colourings knots with different Alexander polynomials. The colourings of
knots with the same Alexander polynomial are also studied  as regards
when they can and cannot be distinguished by colourings.
\end{abstract}

\section{Introduction}
The number of quandle colourings of a knot diagram is a well known and rich invariant of a knot \cite{Carter:1245188}.
An interesting class of quandles are the linear Alexander quandles given by two coprime integers $n$ and $m$. 
A closely related invariant of the knot is its Alexander polynomial  \cite {alexander} and in this article we clarify a number of points about the precise relationship between the two invariants. Note that if the Alexander polynomial is replaced by the collection of all Alexander polynomials, then by a result of Inoue  \cite{Inoue2001a} these completely determine the number of quandle colourings for any Alexander quandle, linear or otherwise.

Although ingenious methods 
may be used to find the  number of quandle colourings of a knot diagram the simplest (and fastest) is to use a general formula
giving that number. In this article we find general expressions for the number of colourings 
which apply to all prime knots up to ten crossings with the exception of $12$ knots.  This  simply involves 
finding an analytical solution of the equation $AX=0$ for a simplified
 (triangularized)  matrix obtained from the  colouring matrix. The simplification uses standard transformations
 (multiplication of rows by units, adding rows and swapping rows or columns) 
 but we do not allow general operations on columns for the reason that such operations do not
 preserve the property that the entries in any row in the colouring matrix add up to zero. This property is useful
 since it facilitates the solution of the equations. The triangularized   
 matrices  were obtained using the Mathematica programming environment.   
 Using the same algorithms we also simplified  the matrices that we were unable to triangularize. 
 Furthermore we  show that some of these matrices cannot be triangularized.
The simplified matrices are related to the \textit{presentation matrices}  in Kawauchi's A Survey of Knot Theory  \cite{kawauchi1996survey}
since they are  obtained with similar, although less general,  types of operations.

The formulae for the number of colourings  allow us to draw some conclusions about properties 
of knots and their colourings using linear Alexander quandles: 
knots with triangularizable colouring matrices and different Alexander polynomials can always be 
distinguished by colourings and we conjecture that this is true if  the triangularizability  condition 
is dropped. On the other hand there are classes of  knots with the same Alexander polynomial
that cannot be distinguished by linear Alexander quandles and other classes with the same 
Alexander polynomial which can be distinguished by such colourings.

The structure of this article is as follows.
In section \ref{backg} we recall the basic notions of quandles
(\cite{Joyce198237}, \cite{matveev1982a}), in particular linear Alexander quandles, 
colourings of knot diagrams (\cite{pre06074274}, \cite{Carter:1245188}, \cite{Hayashi:1391930}, \cite{Inoue2001a}, \cite{Lopes2001a}, \cite{Nelson2002a}), 
the colouring matrix and the Alexander polynomial  
(\cite{alexander}, \cite{livingston1993knot}).

Section \ref{triangmat} presents the two types of triangularized matrices found when considering colouring matrices for
prime knots up to ten crossings and section \ref{typesofknots} compares these matrices with those  in 
\cite{kawauchi1996survey}. Section \ref{computing} presents the expressions giving the number of 
 colourings when the colouring matrices are triangularizable.
 In section \ref{comparing} we compare the number of colourings for knots with the same Alexander polynomial.  In section \ref{nontriang} we see that 
 it is useful to simplify colouring matrices even if they cannot be triangularized and in section \ref{nontriang2} we prove that there are colouring matrices that cannot be triangularized. In section \ref{alexandcol} we prove that knots with different Alexander polynomials and triangularizable colouring  matrices are distinguished by colourings. In section \ref{conc} we conclude and discuss further work. In the appendix we present the simplified  matrices, obtained from the colouring matrices, which are needed for the  expressions.

\section{Background}\label{backg}

In this section we recall the definition of a quandle and the notion of
coloring of a diagram. Since the knot quandle is a classifying invariant for 
knots (introduced independently by Joyce and Matveev - see \cite{Joyce198237} and \cite{matveev1982a}),  the number of colorings associated to a knot diagram is a knot invariant.
Later we present the notions of finite Alexander quandle, 
Alexander polynomial and   linear finite Alexander quandle.

\subsection{Quandles and colourings}
Colourings of the arcs of oriented knot diagrams with elements of a 
quandle generalize $mod\ p$ labellings of the arcs, that, in 
 turn, generalize the colorability invariant of R.\ Fox (with $p=3$ colours).
 They are also a generalization of arc labellings of oriented knot diagrams with group elements (see, for instance \cite{livingston1993knot}). 
 At each crossing the quandle elements labelling the arcs  are related by the quandle operation $\ast$. 
 The number of colourings is a knot invariant since different diagrams of the same knot have the same number of colourings 
 using a given quandle. Indeed, the definition of a quandle consists of precisely those properties of the binary operation $*$ that ensure 
  that colourings are preserved under the Reidemeister moves.

\begin{definition}
(Quandle) A quandle is a set X endowed with a binary operation, denoted $\ast$,
such that:

(a) for any $a\in X,a\ast a=a$

(b) for any $a$ and $b$ $\in X$, there is a unique $x\in X$ such that $%
a=x\ast b$

(c) for any $a,b$ and $c$ $\in X,\left( a\ast b\right) \ast c=\left( a\ast
c\right) \ast \left( b\ast c\right) $
\end{definition}

The definition of colouring of a knot diagram follows.

\begin{definition}
(colouring of a knot diagram) Let $X$ be a fixed finite quandle, $K$ a knot
(assumed to be oriented), $\stackrel{\rightarrow}{D}$ a diagram of $K$ and $R_{\stackrel{\rightarrow}{D}}$\ the set of arcs
of $\stackrel{\rightarrow}{D}$. A colouring of a diagram $\stackrel{\rightarrow}{D}$\ is a map $C:R_{\stackrel{\rightarrow}{D}}\longrightarrow X$
such that, at each crossing:

\parbox{6cm}{$$
\xy

\xygraph{
  !{0;/r4.0pc/:}
  !{\xoverv<>|>>><{r_1\mapsto x}|{r\mapsto y}>{\makebox[-2 cm]{$\labelstyle r_2\mapsto x\ast y$}}}
}

\endxy
$$
}
\hfill
\parbox{6cm}{
$$
\xy

\xygraph{
  !{0;/r4.0pc/:}
  !{\xunderv<>|>>>>{r_3\mapsto x\ast' y}|{r\mapsto y}<{r_1\mapsto x}}
}

\endxy
$$
}

i.e.\ if $C(r_{1})=x$ and $C(r)=y$, then $C(r_{2})=x\ast y$ for the crossing on the left, and for the crossing on the right,
 if $C(r_{1})=x$ and $C(r)=y$, then $C(r_{3})=x\ast' y$
where given $x,y$, $x\ast' y$ is the unique element such that $x=(x\ast'y)\ast y$.  \hfill \rule{0.5em}{0.5em}
\end{definition}

Colourings of knot diagrams using quandles are knot invariants in the following sense.

\begin{theorem}
Let $Q$ be a fixed finite quandle, $K$ a knot

and  $\stackrel{\rightarrow}{D}$ and $\stackrel{\rightarrow}{D'}$ oriented diagrams of $K$. 
Then the number of  colourings $C:R_{\stackrel{\rightarrow}{D}}\longrightarrow Q$ of diagram $\stackrel{\rightarrow}{D}$ using $Q$ is equal to the number of 
colourings  $C':R_{\stackrel{\rightarrow}{D'}}\longrightarrow X$ of diagram $\stackrel{\rightarrow}{D'}$ using $Q$.
\end{theorem}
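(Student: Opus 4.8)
The plan is to prove invariance of the colouring count under the three Reidemeister moves, since any two diagrams of the same knot are related by a finite sequence of such moves (together with planar isotopies, which trivially preserve colourings). Concretely, I would exhibit for each Reidemeister move a bijection between the set of colourings of a diagram $\stackrel{\rightarrow}{D}$ and the set of colourings of the diagram $\stackrel{\rightarrow}{D'}$ obtained by performing the move. Because a Reidemeister move only alters the diagram inside a small disc, leaving the rest unchanged, it suffices to check that colourings of the tangle inside the disc, with prescribed colours on the boundary arcs, are in bijection before and after the move; outside the disc the colouring is simply carried over unchanged.

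First I would treat Reidemeister I: a kink introduces one new arc and one new crossing. If the incoming arc is coloured $x$, the crossing condition forces the new short arc to be coloured $x\ast x$, which equals $x$ by quandle axiom (a), so the outgoing arc is again $x$; hence there is exactly one way to extend, giving a bijection with the unkinked picture. Next, Reidemeister II: two strands crossing twice create two new arcs; if the over-strand is coloured $y$ and the under-strand enters coloured $x$, the first crossing forces the middle arc to be $x\ast y$ (or $x\ast' y$, depending on orientation) and the second crossing returns it to $(x\ast y)\ast' y = x$ by the definition of $\ast'$ as the inverse operation (quandle axiom (b) guarantees $\ast'$ is well defined and single-valued). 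So again the extension is unique, and the boundary colours match those of the two disjoint strands. Finally, Reidemeister III: here three strands and the triangular region are involved; with boundary arcs coloured, say, $x$, $y$, $z$ on the three incoming strands, one computes the colours of the internal arcs on both sides of the move and checks they induce the same colours on the outgoing boundary arcs. This is exactly where quandle axiom (c), the self-distributivity $(a\ast b)\ast c=(a\ast c)\ast(b\ast c)$, is needed: the two ways of resolving the triple point give outgoing colours that agree precisely because of (c) (and its $\ast'$-analogues for the various orientations of the three strands).

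Since each move yields a bijection, composing them along a sequence connecting $\stackrel{\rightarrow}{D}$ to $\stackrel{\rightarrow}{D'}$ gives a bijection between the full colouring sets, and in particular equality of their cardinalities. I would remark that one must in principle check all orientation variants of each move (the number of cases can be reduced by noting that $\ast'$ plays a symmetric role, or by using the fact that it suffices to check a generating subset of oriented Reidemeister moves, cf.\ \cite{livingston1993knot}), but each variant is a short verification using exactly the axiom indicated above.

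The main obstacle is really bookkeeping rather than depth: organizing the Reidemeister III case so that all orientation and over/under configurations are covered without an unwieldy case explosion, and making sure the $\ast'$ operation is handled consistently (using $x\ast' y$ as the unique solution of $x=(x\ast' y)\ast y$, so that $(x\ast y)\ast' y = x = (x\ast' y)\ast y$ throughout). The conceptually essential point — that the three quandle axioms are \emph{exactly} what is required, one axiom per Reidemeister move — is standard and was in fact the original motivation for the definition, as noted in the discussion preceding the statement.
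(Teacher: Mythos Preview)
Your proposal is correct and follows the standard argument. The paper itself does not supply a proof of this theorem: it simply states the result and refers the reader to the literature (\cite{Carter:1245188}, \cite{Inoue2001a}, \cite{kauffman1991knots}, \cite{Lopes2001a}, \cite{Nelson2002a}), having already remarked just before the definition of colouring that ``the definition of a quandle consists of precisely those properties of the binary operation $*$ that ensure that colourings are preserved under the Reidemeister moves.'' Your sketch makes this remark explicit, matching axiom~(a) to Reidemeister~I, axiom~(b) to Reidemeister~II, and axiom~(c) to Reidemeister~III, which is exactly the intended (and classical) argument.
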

For a more complete discussion of the results above and related topics
see \cite{Carter:1245188}, \cite{Inoue2001a},  \cite{kauffman1991knots},  \cite{Lopes2001a} and \cite{Nelson2002a}.

\subsection{Finite Alexander Quandles}\label{falexq}
Finite Alexander quandles have the form $\mathbb{Z}_{n}[t,t^{-1}]/h(t)$ where $n$ is an integer and $h(t)$ is a monic polynomial in $t$. These quandles have as elements equivalence classes of Laurent polynomials with coefficients in $\mathbb{Z}_{n}$,  where two polynomials are equivalent if their difference is divisible by $h(t)$.
The quandle operation is $a\ast b=t a+(1-t)b$. Note that this means equality of quandle elements, i.e.\ equivalence classes of Laurent polynomials. Recall that $c=a\ast'b$ is defined  to mean the same as $a=c\ast b$. From this it follows easily that $a\ast'b=t^{-1}a+(1-t^{-1})b$. 

For finite Alexander quandles the colouring condition at each crossing states that the label of the emerging arc is expressed as a linear combination of the labels of the other two arcs. Therefore one uses matrices to organize the colouring conditions (equations).

For that purpose it is important to have an enumeration of the arcs and an enumeration of the crossings.  Any enumeration will do. However, we will use for arcs an  enumeration that assigns $i+1$ to the emerging arc where $i$ is the number assigned to the under arc (see next figure), except for the last crossing when the emerging arc is already labelled (by $1$).  The enumeration for crossings is also arbitrary. We will use the enumeration suggested by the enumeration of arcs, i.e.\ the $k$-th crossing is the one with under arc also labelled $k$.\enlargethispage{\baselineskip}

\parbox{4.5cm}{
$$
\xy

\xygraph{
  !{0;/r4.0pc/:}
  !{\xoverv<>|>>><{i}|{j}>{i+1}}
}
\endxy
$$
}
\hfill
\parbox{4.5cm}{
$$
\xy
\xygraph{
  !{0;/r4.0pc/:}
  !{\xunderv<>|>>><{j}|{i}>{j+1}}
}
\endxy
$$

}

Let $X_k$ be the label (in the quandle) of arc $k$. Then the colouring conditions can be written as follows:

\parbox{4.5cm}{
$$
\xy

\xygraph{
  !{0;/r4.0pc/:}
  !{\xoverv<>|>>><{X_i}|{X_j}>{\makebox[-2.7 cm]{$\labelstyle X_{i+1}=X_i\ast X_j$}}}
}
\endxy
$$
}
\hfill
\parbox{4.5cm}{
$$
\xy
\xygraph{
  !{0;/r4.0pc/:}
  !{\xunderv<>|>>><{X_j}|{X_i}>{\makebox[-2.7 cm]{$\labelstyle  X_{j+1}=X_j\ast' X_i$}}}
}
\endxy
$$
}

The condition $X_{i+1}=X_i\ast X_j$ is $X_{i+1}=tX_i+(1-t)X_j$ or, equivalently,  $tX_i+(1-t)X_j-X_{i+1}=0$. 
The condition $X_{j+1}=X_j\ast' X_i$ is $X_{j+1}= t^{-1}X_j+(1-t^{-1})X_i$ 
and can be equivalently written $t^{-1}X_j+(1-t^{-1})X_i-X_{j+1}= 0$.

Given a (oriented) diagram $\stackrel{\rightarrow}{D}$ of a knot $K$, we can write the colouring conditions as a matrix equation 
$$AX=0$$

where $X$ is the vector of colouring unknowns ($X_1X_2\ldots X_i\ldots$ ) and each row in the matrix $A$ represents a colouring condition for one crossing in  $\stackrel{\rightarrow}{D}$. We will call the matrix $A$ a \textit{colouring matrix}. For example,

\[
\left[ 
\begin{array}{cccc}
t^{-1} & -1 & 0 & 1-t^{-1} \\ 
1-t & t & -1 & 0 \\ 
0 & 1-t^{-1} & t^{-1} & -1 \\ 
-1 & 0 & 1-t & t%
\end{array}
\right] \left[ 
\begin{array}{c}
X_{1} \\ 
X_{2} \\ 
X_{3} \\ 
X_{4}%
\end{array}
\right] =\left[ 
\begin{array}{c}
0 \\ 
0 \\ 
0 \\ 
0%
\end{array}
\right] 
\]

is the matrix equation corresponding to the following diagram of the knot $4_1$ (the figure-8 knot):
\vspace{-3\baselineskip}

$$\xygraph{ 
  !{0;/r1.5pc/:}
  !P9"e"{~:{(5,0):}~>{}}[u]
  !P5"d"{~:{(1.41421,0):}~>{}}[dd]
  !P4"a"{~:{(1.41421,0):}~>{}}[ddl]
  !P8"b"{~={45}~>{}} [rr]
  !P8"c"{~={45}~>{}} [ddl][u(0.1)]
  !P3"f"{~>{}}
  !{\xoverh~{"a2"}{"a1"}{"a3"}{"a4"}}
  !{\vover~{"b6"}{"b4"}{"f1"}{"b2"}}
  !{\vunder~{"c6"}{"c8"}{"f1"}{"c2"}}
  !{\vover~{"d3"}{"d1"}{"a2"}{"a1"}<{3}}
  !{\xcapv~{"c6"}{"f2"}{"b6"}{"f3"}<{4}}
  !{\hcap~{"d3"}{"e5"}{"b4"}{"e5"}<<<{1}}
  !{\hcap~{"d1"}{"e1"}{"c8"}{"e1"}<{2}}
}$$

\begin{remark}Obviously the number of colourings of a diagram in a 
linear Alexander quandle is the number of solutions of $AX=0$.
\end{remark}

\begin{remark}Instead of writing the condition $X_{j+1}=X_j\ast' X_i$ as $X_{j+1}= t^{-1}X_j+(1-t^{-1})X_i$ one could recall that $X_{j+1}=X_j\ast' X_i$ is equivalent to $X_j=X_{j+1}\ast X_i$ and this can be written as $X_j=tX_{j+1}+(1-t)X_i$ or as  $tX_{j+1}-X_j+(1-t)X_i=0$. This remark will be useful in the next section.
In this case the matrix defining the colouring conditions for $4_1$ is:

\[
\left[ 
\begin{array}{cccc}
-1 & t & 0 & 1-t\\ 
1-t & t & -1 & 0 \\ 
0 & 1-t & -1& t \\ 
-1 & 0 & 1-t & t%
\end{array}
\right] 
\]

\end{remark}

\subsection{The Alexander Polynomial}\label{alexpol}
The Alexander polynomial $\mbox{Alex}_K(t)$ (\cite{alexander}) of a knot $K$ is a knot invariant that is computed as follows (\cite{livingston1993knot}). First pick an oriented diagram $\stackrel{\rightarrow}{D}$ for $K$. Number the arcs of the diagram, and separately number the crossings. Next, define an $N\times N$ matrix, where $N$ is the number of crossings (and arcs) in the diagram, according to the following procedure: 

If the crossing numbered $l$ is right-handed with arc $i$ passing over arcs $j$ and $k$, as illustrated below by the diagram on the  left, enter $1-t$ in column $i$ of row $l$, enter  $-1$ in column $j$ of that row, and enter  $t$ in column $k$ of the same row. If the crossing is left-handed, as illustrated by the diagram on the right, enter  $1-t$ in column $i$ of row $l$,  $t$ in column $j$ and enter  $-1$ in column $k$ of row $l$. All of the remaining entries of row $l$ are $0$. (An exceptional case occurs if any two of $i$, $j$ or $k$ are equal. In this exceptional case the sum of the entries described above is put in the appropriate column).

\parbox{4.5cm}{
$$
\xy
\xygraph{
  !{0;/r4.0pc/:}
  !{\xoverv=<<{k}|{i}>{j}>}
}
\endxy
$$
}
\hfill
\parbox{4.5cm}{
$$
\xy
\xygraph{
  !{0;/r4.0pc/:}
  !{\xunderv=<<{k}|{i}>{j}}
}
\endxy
$$
}

\begin{remark} We can think of the rules defining the rows in the matrix above as an implicit way of defining a condition on the colourings of the arcs. If, as before, $X_i$, $X_j$ and $X_k$ denote unknowns for colourings of the arcs $i, j$ and $k$ respectively, then the rule for row $l$ of a right-handed crossing  can be equivalently written as $(1-t)X_i-X_j+tX_k=0$. This is the same as $X_j=tX_k+(1-t)X_i$. This in turn is equivalent to $X_j=X_k\ast X_i$.
The rule for the left-handed crossing translates to $(1-t)X_i+tX_j-X_k=0$ and this is the same as $X_k=X_j\ast X_i$. If we write $X_j$ in terms of $X_k$ and $X_i$, we obtain $X_j=X_k\ast' X_i$. This can also be written as $X_j=t^{-1}X_k+(1-t^{-1})X_i$. 
The original equation (for the left-handed crossing) can be recovered by multiplying by $-t$. 
Note that this changes the determinant of $A$ by factors of $-t$. 
This does not change the Alexander polynomial (see definition \ref{alexmatrix} below) that is defined up to products of $\pm t$. 
 Note also that the exceptional cases are no longer ``exceptional'' - they consist of the special cases of the equations when some of the unknowns are equal. 

The conditions above are summarized in the following diagram.

\parbox{4.5cm}{
$$
\xy
\xygraph{
  !{0;/r4.0pc/:}
  !{\xoverv=<<{X_k}|{X_i}>{\makebox[-2.7 cm]{$\labelstyle X_{j}=X_k\ast X_i$}}}
}
\endxy
$$
}
\hfill
\parbox{4.5cm}{
$$
\xy
\xygraph{
  !{0;/r4.0pc/:}
  !{\xunderv=<<{X_k}|{X_i}>{\makebox[-2.7 cm]{$\labelstyle  X_{j}=X_k\ast' X_i$}}}
}
\endxy
$$
}

Note that if we reverse all  arrows in the crossings we obtain precisely the colouring conditions presented before. For this reason the matrix $A$ encoding the colouring conditions for the oriented diagram $\stackrel{\rightarrow}{D}$ of knot $K$ is essentially the same as the matrix just defined but calculated for $\stackrel{\leftarrow}{D}$, i.\ e.\ the same diagram $D$ but with the orientation reversed (and keeping the labels of arcs and crossings the same).
\end{remark}

The definition of the Alexander polynomial follows.

\begin{definition} (from \cite{livingston1993knot})\label{alexmatrix}
The $(N-1)\times(N-1)$ matrix obtained by removing the final row and column from the $N\times N$ colouring matrix described above is called the Alexander matrix of $K$. The determinant of the Alexander matrix is called the Alexander polynomial of $K$. (The determinant of a $0\times 0$ matrix is defined to be $1$.)

Different diagrams of the same knot may lead to different Alexander polynomials related by a sign and a factor which is a power of $t$. This equivalence class is the Alexander polynomial. It is normal to normalize it (\cite{crowell1977introduction}) choosing a polynomial with ``no negative powers of $t$ and a positive constant term".
\end{definition}

\subsection{Linear Finite Alexander Quandles}\label{linFin}
In the following we will be dealing with linear finite Alexander quandles which
have the form $\mathbb{Z}_{n}[t,t^{-1}]\ /\ (t-m)$, where $n$ and $m$ are integers and $n, m$ are coprime.  Recall that the elements are equivalence classes of Laurent polynomials having the same remainder when divided by $t-m$. Obviously the polynomial $t$ is in the same equivalence class as the constant polynomial $m$,  since $t=(t-m)+m$.
Similarly $t^{-1}$ is equivalent to $m^{-1}$ (the inverse of $m$ in $\mathbb{Z}_{n}$), since 
$t^{-1}-m^{-1}=-m^{-1}t^{-1}(t-m)$.
It follows that any polynomial is equivalent to some number in $\mathbb{Z}_{n}$ and that one can identify $\mathbb{Z}_{n}[t,t^{-1}]\ /\ (t-m)$ with $\mathbb{Z}_{n}$.
The quandle operation can be written as  $a\ast b=m a+(1-m)b\ (\mbox{mod}\ n)$ and
$a\ast'b=m^{-1}a+(1-m^{-1})b\ (\mbox{mod}\ n)$.
Again for the knot $4_1$ the colourings in any linear finite Alexander quandle are equivalently given by the following equation:

\[
\left[ 
\begin{array}{cccc}
m^{-1} & -1 & 0 & 1-m^{-1} \\ 
1-m & m & -1 & 0 \\ 
0 & 1-m^{-1} & m^{-1} & -1 \\ 
-1 & 0 & 1-m & m%
\end{array}
\right] \left[ 
\begin{array}{c}
X_{1} \\ 
X_{2} \\ 
X_{3} \\ 
X_{4}%
\end{array}
\right] =\left[ 
\begin{array}{c}
0 \\ 
0 \\ 
0 \\ 
0%
\end{array}
\right] 
\]

where the colourings $X_i$ belong to $\mathbb{Z}_{n}$ and the equalities hold in $\mathbb{Z}_{n}$ (i.e.\ equality  mod\ $n$).

\section{Triangularized matrices}\label{triangmat}
For linear finite Alexander quandles $\mathbb{Z}_n[t,t^{-1}]/(t-m)$ the colourings are the solutions of $AX=0$ where $A$ is 
the colouring matrix and equality is mod $n$.  In order to find expressions for the number of colourings of a knot in different 
quandles it is convenient to rewrite the colouring matrix in triangular form.  For that purpose we have written algorithms   in
Mathematica  
that transform the
original matrix into an   equivalent one  by multiplying rows by $-1$, $m$ and $m^{-1}$ ($m$ is invertible since $\mbox{gcd}(m,n)=1$) and 
adding them, i.e.\ by replacing a row  by its sum with another one. Swapping rows\footnote{In fact swapping rows can be  achieved by suitably combining addition of rows and multiplication  by $-1$, $m$ and $m^{-1}$.} or columns is also allowed. 
The definition of equivalent matrices, in this sense, follows.

\begin{definition}\label{equivmatrices}Let $K$ be a knot and $A$ its colouring matrix. The matrix $B$ is equivalent to $A$ if it is obtained from $A$ by a sequence of matrices each obtained from the previous one by one of the following operations: a) multiplication of a row by $m$, or $m^{-1}$ or $-1$;  b) replacing a row by its sum with some row; c) swapping two rows and d) swapping two  columns.  If there is a triangular matrix equivalent to $A$ we say that $A$ is triangularizable.
\end {definition}

We will prove in section \ref{nontriang2} that there are, however,  matrices that cannot be triangularized  using these operations. 
Nevertheless  it was possible to triangularize in this way almost all colouring matrices for prime knots up to $10$ crossings, with the exception of the following $12$ knots:
$$9_{35}; 9_{38}; 9_{41}; 9_{47}; 9_{48}; 9_{49}; 10_{69}; 10_{101}; 10_{108}; 10_{115}; 10_{157}; 10_{160}$$
If we allow column operations analogous to the row operations of type a) and b) it is possible to triangularize $2$ of these namely the knots $9_{41}$ and $10_{108}$. See  the discussion in section \ref{typesofknots}. However, the expressions for the number of colourings presented in section \ref{computing}  depend on the property  that the entries in each row  in the colouring matrix add up to zero. This property is preserved under swapping columns and   row operations  of type a) and b) and c)  but not under more general column operations.

For the prime knots up to $10$ crossings with colouring matrices that we were able
to triangularize (i.e.\ all except the previously mentioned $12$) the final matrices had one of two different forms, that we will call type I and type II.
Both  have one final row of zeros. 
\enlargethispage{-3\baselineskip}

\begin{center}
type I%

\mbox{
$
\left[
\begin{array}
[c]{ccccc}%
1 & \lambda_{12} (m)& \cdots & \cdots &  \lambda_{1N} (m)\\
0 & \ddots & \ddots & \cdots & \vdots\\
\vdots & 0 & 1 &  \lambda_{N-2\ N-1}  (m)&  \lambda_{N-2\ N} (m)\\
\vdots & \vdots & 0 & \alpha(m)& -\alpha(m)\\
0 & 0 & \cdots & \cdots & 0
\end{array}
\right]
$}

\vspace{2\baselineskip}

Matrices of type I have one ``interesting row'', i.e.\  only one of the other rows does not have $1$ in the diagonal,
 and matrices of type II have two such rows:

type II%

\mbox{$\left[
\begin{array}
[c]{cccccc}%
1 &  \lambda_{12} (m) & \cdots & \cdots & \cdots &  \lambda_{1N} (m)\\
0 & \ddots & \ddots & \cdots & \cdots & \vdots\\
\vdots & 0 & 1 &  \lambda_{N-3\ N-2} (m)&  \lambda_{N-3\ N-1}  (m)&  \lambda_{N-3\ N} (m)\\
\vdots & \vdots & 0 & \alpha_{1}(m) & \beta_{1}(m) & -(\alpha_{1}(m)+\beta_{1}(m))\\
\vdots & \vdots & \vdots & 0 & \alpha_{2}(m) & -\alpha_{2}(m)\\
0 & 0 & 0 & \cdots & \cdots & 0
\end{array}
\right]  $}
\end{center}

Recall that the Alexander polynomial is the determinant of the Alexander matrix obtained from the colouring
 matrix by removing the final row and the final column. 
Therefore for $N\times N$ matrices of type I the Alexander polynomial is the polynomial 
$\alpha(m)$ at entry $N-1,N-1$ and for  $N\times N$ matrices of type II the Alexander polynomial
 is $\alpha_1(m)\times \alpha_2(m)$, the product of the 
polynomials at entries   $N-2,N-2$ and $N-1,N-1$.

In order to compute the number of colourings, only the ``interesting rows'' matter,
i.e.\ the rows excluding   the final  one that have an entry  not equal to $1$ in the diagonal.
For matrices of type I there is only one such row, the penultimate one.  Since the final column
is also redundant, matrices of type I are \textit{presented}  by   the polynomial 
$\alpha(m)$ at entry $N-1,N-1$, i.e.\ by their Alexander polynomial.  Furthermore, 
matrices of type II are \textit{presented} by two ``interesting'' rows 
(where the first $N-3$ columns and also the final column have been removed):

\begin{center}
\mbox{$\left[
\begin{array}
[c]{cc}%
 \alpha_{1}(m) & \beta_{1}(m) \\
 0 & \alpha_{2}(m) \\
\end{array}
\right]  $}
\end{center}

These considerations will be of use in section \ref{computing}.

\section{Types of knots}\label{typesofknots} 
We have seen before that the colouring matrices for prime knots up to ten crossings are either non-triangularizable or equivalent to 
a matrix of type I or of type II.
A comparison with the presentation matrices of \cite{kawauchi1996survey} makes sense since these are also obtained from colouring matrices 
by simplification operations including those in definition \ref{equivmatrices} as well as more general operations, in particular on columns.

We begin with the colouring matrices of knots that we were unable to triangularize.

\subsection{Knots with non-triangularized colouring matrices}
The colouring matrices   that we were unable to triangularize with row operations and swapping columns are the previously mentioned colouring matrices of the  $12$ following knots:
$$9_{35}; 9_{38}; 9_{41}; 9_{47}; 9_{48}; 9_{49}; 10_{69};10_{101}; 10_{108}; 10_{115};  10_{157}; 10_{160}$$

We will show  in section \ref{nontriang2} that it is impossible to triangularize 
(using row operations column swaps) the colouring matrices for  the $5$ knots $9_{35}, 9_{47}, 9_{48}, 9_{49}$ and $10_{157}$.  They also 
have non-triangular presentation matrices in  appendix F of A Survey of Knot Theory (\cite{kawauchi1996survey}). 
The $4$ knots $10_{69}, 10_{101}, 10_{115}$ and  $10_{160}$ 
also have non-triangular presentation matrices in that appendix. We conjecture that it is also impossible to triangularize
the colouring matrices for 
these  $4$ knots because they have non-factorizable Alexander polynomials (see section \ref{nontriang2}).

If we allow more general  column operations
it is possible to triangularize the colouring matrices for $2$ of the remaining $3$ knots, 
namely those of  the knots $9_{41}$ and $10_{108}$.  The  colouring matrix of  $9_{41}$ becomes type II and the one for $10_{108}$ 
 becomes type I in agreement with \cite{kawauchi1996survey}.  We have not yet been able to
triangularize the  colouring matrix for knot $9_{38}$ that has a triangularized presentation matrix (type II)
in appendix F of \cite{kawauchi1996survey}.

\subsection{Type II}
Using only row operations and swapping columns  we have obtained  type II colouring matrices for the following $21$ knots:
$$8_{18}; 9_{37}; 9_{40}; 9_{46}; 10_{61}; 10_{63}; 10_{65}; 10_{74}; 10_{75}; 10_{98}; 10_{99}; 10_{103}; $$
$$10_{106}; 10_{122}; 10_{123};10_{140};10_{142}; 10_{144}; 10_{147};10_{155}; 10_{164}$$

The  $18$ knots $8_{18}, 9_{37}, 9_{40}, 9_{46}, 10_{61}, 10_{63}, 10_{74}$,  $10_{75}$, $10_{98}, 10_{99}, 10_{103}, 
10_{122}$, $10_{123}$,
$10_{140}$, $10_{142}$, $10_{144},10_{155}$ and $10_{164}$ have type II matrices
and the presentation matrices for these knots in  A Survey of Knot Theory (\cite{kawauchi1996survey}) are also of type II.

 Furthermore
the  colouring matrices for   knots $10_{106}$ and $10_{147}$ can be further simplified if other operations on columns are allowed, therefore becoming type I, also
in agreement with  A Survey of Knot Theory.
One knot, $10_{65}$,  has a  type II colouring matrix  according to our algorithm and we were unable to simplify its colouring matrix even  when column operations were allowed. 
However,  it is type I according to  A Survey of Knot Theory (\cite{kawauchi1996survey}).

\subsection{Type I}
All $216$ type I knots\footnote{We use the expression ``type I knot'' to refer to a knot with a colouring matrix equivalent to a type I matrix. 
 The expression ``type II knot" has a similar meaning.} obtained by processing the colouring matrix allowing only row operations and swapping of columns are also type I knots according to  A Survey of Knot Theory (\cite{kawauchi1996survey}).  
These are listed in the appendix.

\subsection{Conclusion}
For knots with colouring matrices equivalent to  type I or type II matrices we will provide an explicit expression for the number of colourings using any linear Alexander quandle. 
Therefore, for this purpose it is  irrelevant if a type II  colouring matrix may be further simplified to type I 
since such an expression can be found in any case and all we want is this expression.
What is relevant  is whether the colouring matrix can be triangularized or not using only row operations and swapping of columns since  
the expressions  depend on the property  that the entries in each row  in the colouring matrix add up to zero.

There are $12$ colouring matrices  that we were unable to triangularize (using row operations and swapping of columns). 
Five (those of the knots $9_{35}, 9_{47}, 9_{48}, 9_{49}$ and $10_{157}$) are proven not to be  triangularizable by 
row operations and swapping of columns in section \ref{nontriang2}, and  we conjecture there 
that the same is true of another four (those of the knots $10_{69}, 10_{101}, 10_{115}$ and  $10_{160}$). 
We were also unable to triangularize  the colouring matrix of knot $9_{38}$. 
The remaining two colouring matrices are those of knots $9_{41}$ and  $10_{108}$ that we were only able to triangularize using other
column operations that do not preserve the property that the sum of entries in each row equals zero.
Therefore we were able to find a general expression for the colourings of all but the following $12$ knots:
  
  \begin{center}
  
  $9_{38}$ and $9_{35}, 9_{47}, 9_{48}, 9_{49}, 10_{157}; 10_{69}, 10_{101}, 10_{115}, 10_{160}$ and $9_{41}; 10_{108}$.

\end{center}

\section{Computing the number of colourings}\label{computing}
Recall that for linear finite Alexander quandles  $\mathbb{Z}_{n}[t,t^{-1}]\ /\ (t-m)$
the colourings are the solutions of $AX=0$ where equality is mod $n$. 
 In this section we are concerned with calculating the number of colourings assuming that an equivalent triangular 
matrix has been found.
Note that for a fixed $m$ and $n$ each polynomial entry $p$ yields a concrete value in $\mathbb{Z}_{n}$.

Although not necessary it is convenient to replace the values $v$ thus obtained
by their   equivalent $v\ \mbox{mod} \ n$. 

For example, the following matrix is triangular and a colouring matrix for knot $8_{18}$ (we omit some entries):

{\tiny
\[
\left[
\begin{array}
[c]{cccccccc}%
1 & \lambda_{12} (m)& \cdots & \cdots & \cdots & \cdots & \cdots & \lambda_{18}(m)\\
0 & 1 & \ddots & \cdots & \cdots & \cdots & \cdots & \vdots\\
0 & 0 & 1 & \ddots & \cdots & \cdots & \cdots & \vdots\\
0 & 0 & 0 & 1 & \ddots & \cdots & \cdots & \vdots\\
0 & 0 & 0 & 0 & 1 & \lambda_{56} (m)& \cdots & \lambda_{58}(m)\\
0 & 0 & 0 & 0 & 0 & -1+m-m^{2} & m-m^2+m^3 &1-2 m+2 m^2-m^3\\
0 & 0 & 0 & 0 & 0 & 0 & 1-4 m+5 m^2-4 m^3+m^4 &-1+4 m-5 m^2+4 m^3-m^4 \\
0 & 0 & 0 & 0 & 0 & 0 & 0 & 0
\end{array}
\right]
\]
}
 
For the quandle $\mathbb{Z}_{15}[t,t^{-1}]/(t-8)$ (choosing $m=8$ and $n=15$) 
we obtain the matrix:%

\[
A_{8_{18}}=\left[
\begin{array}
[c]{cccccccc}%
1 & \lambda_{12} (m) & \cdots & \cdots & \cdots & \cdots & \cdots & \lambda_{18} (m)\\
0 & 1 & \ddots & \cdots & \cdots & \cdots & \cdots & \vdots\\
0 & 0 & 1 & \ddots & \cdots & \cdots & \cdots & \vdots\\
0 & 0 & 0 & 1 & \ddots & \cdots & \cdots & \vdots\\
0 & 0 & 0 & 0 & 1 & \lambda_{56} (m) & \cdots & \lambda_{58} (m)\\
0 & 0 & 0 & 0 & 0 & 3 & 6 & 6\\
0 & 0 & 0 & 0 & 0 & 0 & 12 & 3\\
0 & 0 & 0 & 0 & 0 & 0 & 0 & 0
\end{array}
\right]
\]

The solutions of $A_{8_{18}}X=0\ \mbox{mod}\ 15$ are the colourings of knot $8_{18}$ 
using the quandle $\mathbb{Z}_{15}[t,t^{-1}]/(t-8)$. Note that adding the entries in each row  yields $0\ \mbox{mod}\ 15$.  
The number of solutions is easily determined (see below). 

A result that we will use often is the well known linear congruence theorem.

\begin{proposition}If $a$ and $b$ are integers and $n$ is a positive integer, then the congruence $ax=b\ \mbox{mod}\ n$ has a solution for $x$ if and only if $b$ is divisible by the greatest common divisor  of $a$ and $n$, $d=\mbox{gcd}(a,n)$. When this is the case, and $x_0$ is a solution of $ax=b\ \mbox{mod}\ n$, then the set of all solutions is given by $\{x_0+k\frac{n}{d}, k\in \mathbb{Z}\}$. In particular, there will be exactly  $d=\mbox{gcd}(a,n)$  solutions in the set $\{0,...,n-1\}$.
\end{proposition}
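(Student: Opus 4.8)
The plan is to reduce all three assertions to Bézout's identity and Euclid's lemma, treating in turn the solvability criterion, the description of the full solution set, and the count of incongruent solutions.

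For solvability, I would first observe that $ax\equiv b\ (\mbox{mod}\ n)$ holds for some integer $x$ if and only if there exist integers $x,y$ with $ax+ny=b$, that is, if and only if $b$ lies in the set $\{ax+ny : x,y\in\mathbb{Z}\}$. By Bézout's identity this set is precisely $d\mathbb{Z}$, where $d=\mbox{gcd}(a,n)$; hence the congruence is solvable exactly when $d\mid b$. The ``if'' direction is in fact constructive: from integers $u,v$ with $au+nv=d$ and from a factorisation $b=db'$ one obtains the explicit solution $x=ub'$.

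For the structure of the solution set, assume $d\mid b$ and fix one solution $x_0$. Then an integer $x$ is a solution if and only if $a(x-x_0)\equiv 0\ (\mbox{mod}\ n)$, i.e.\ $n\mid a(x-x_0)$. Writing $a=da'$ and $n=dn'$, where $\mbox{gcd}(a',n')=1$, this is equivalent to $n'\mid a'(x-x_0)$, and since $a'$ and $n'$ are coprime, Euclid's lemma gives $n'\mid(x-x_0)$. Thus every solution has the form $x=x_0+kn'=x_0+k\frac{n}{d}$ for some $k\in\mathbb{Z}$; conversely each such number is a solution because $a\cdot k\frac{n}{d}=ka'n\equiv 0\ (\mbox{mod}\ n)$. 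This establishes the claimed description $\{x_0+k\frac{n}{d} : k\in\mathbb{Z}\}$.

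Finally, to count the solutions in $\{0,\dots,n-1\}$, I would note that $x_0+k\frac{n}{d}$ and $x_0+k'\frac{n}{d}$ are congruent mod $n$ precisely when $n\mid\frac{n}{d}(k-k')$, i.e.\ when $d\mid(k-k')$. Hence $k\mapsto x_0+k\frac{n}{d}$ induces a bijection between $\mathbb{Z}/d\mathbb{Z}$ and the set of solutions modulo $n$, so there are exactly $d$ of them, represented by $k=0,1,\dots,d-1$. Since this is the classical linear congruence theorem, no step is genuinely hard; the one place requiring care is the implication $n\mid a(x-x_0)\Rightarrow \frac{n}{d}\mid(x-x_0)$, which is exactly where the coprimality of $a/d$ and $n/d$ (via Euclid's lemma) is essential — dropping it would break both the description of the solution set and the final count.
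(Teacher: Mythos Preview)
Your proof is correct and is the standard argument for the linear congruence theorem. The paper itself does not prove this proposition at all: it simply states it as ``well known'' and uses it as a tool, so there is no proof in the paper to compare against. Your treatment via B\'ezout's identity for solvability, Euclid's lemma for the structure of the solution set, and the residue-class count modulo $d$ is exactly the textbook route and would fit the paper's purposes perfectly.
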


We proceed to find general expressions for the number of colourings for types I and II.  The number of solutions
for triangular matrices of any type could in principle  be determined using standard methods for solving systems of linear
congruences.
See remark \ref{generalsolution} where we comment on this briefly.

\subsection{Type I}
We now want to find the (number of) solutions of  $AX=0\ \mbox{mod}\ n$ where $A$ is of type I and $X=(X_1,...,X_N)$ are unknowns with values 
in the quandle $\mathbb{Z}_n[t,t^{-1}]/(t-m)$. Recall that the elements of the quandle are $\{0,1,...,n-1\}$. 
A brute force method would try all possible combinations of these values for $(X_1,...,X_N)$ and check for which choices all  equations hold.
For  matrices in triangular form we find values for each $X_i$ that are solutions of the equation in row $i$ and depend on previously determined $X_j$s, 
starting with the final row and proceeding upwards. 

\begin{figure}[h]
\begin{center}
\mbox{
$
\left[
\begin{array}
[c]{ccccc}%
1 & \lambda_{12} (m)& \cdots & \cdots &  \lambda_{1N} (m)\\
0 & \ddots & \ddots & \cdots & \vdots\\
\vdots & 0 & 1 &  \lambda_{N-2\ N-1}  (m)&  \lambda_{N-2\ N} (m)\\
\vdots & \vdots & 0 & \alpha(m)& -\alpha(m)\\
0 & 0 & \cdots & \cdots & 0
\end{array}
\right]
$}
\end{center}
\caption{Type I matrix}
\end{figure}

The expression for matrices of type I depends only on the final two rows of the matrix and 
is stated in the next proposition.

\begin{proposition}\label{propt1} Let $A$ be an $N\times N$ triangular  matrix of type I equivalent to the colouring matrix of a knot $K$. \label{coltypeI}
Then $C_Q(K)$, the number of colourings  of $K$ using the linear finite Alexander quandle $Q=\mathbb{Z}_n[T,T^{-1}]/(T-m)$
is $$C_Q(K)=n\times\mbox{gcd}(\mbox{Alex}(m),n).$$
\end{proposition}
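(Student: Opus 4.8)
The plan is to solve the system $AX = 0 \bmod n$ by back-substitution, exploiting the triangular shape of the type I matrix together with the zero-row-sum property. Write $X = (X_1, \dots, X_N)$. The last row is all zeros, so it imposes no constraint and $X_N$ is a free parameter ranging over all of $\mathbb{Z}_n$, contributing a factor of $n$. The real content is the penultimate row, which reads $\alpha(m) X_{N-1} - \alpha(m) X_N = 0 \bmod n$, i.e. $\alpha(m)(X_{N-1} - X_N) = 0 \bmod n$. By the linear congruence theorem (the Proposition stated just above), for each fixed value of $X_N$ the number of solutions $X_{N-1} \in \{0, \dots, n-1\}$ of $\alpha(m) X_{N-1} = \alpha(m) X_N \bmod n$ is exactly $\gcd(\alpha(m), n) = \gcd(\mathrm{Alex}(m), n)$, since the right-hand side $\alpha(m) X_N$ is visibly divisible by $\gcd(\alpha(m),n)$ and one solution always exists (namely $X_{N-1} = X_N$).

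Next I would argue that the remaining rows $N-2, N-3, \dots, 1$ each pin down $X_{N-2}, X_{N-3}, \dots, X_1$ uniquely once the later unknowns are chosen. Indeed row $i$ (for $i \le N-2$) has a $1$ on the diagonal, so it reads $X_i + \sum_{j > i} \lambda_{ij}(m) X_j = 0 \bmod n$, which determines $X_i$ uniquely as $X_i = -\sum_{j>i}\lambda_{ij}(m) X_j \bmod n$. Hence for each of the $n \cdot \gcd(\mathrm{Alex}(m), n)$ choices of the pair $(X_N, X_{N-1})$ there is exactly one completion to a full solution, giving
\[
C_Q(K) = n \times \gcd(\mathrm{Alex}(m), n).
\]
One should also note, as recorded in Section~\ref{triangmat}, that $\alpha(m)$ at entry $(N-1,N-1)$ really is the Alexander polynomial evaluated at $m$: the Alexander matrix is obtained by deleting the last row and column, and its determinant is the product of the diagonal entries $1 \cdot 1 \cdots 1 \cdot \alpha(m) = \alpha(m)$.

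The one genuine subtlety — the main obstacle — is justifying that the number of colourings computed from an \emph{equivalent} triangular matrix $B$ equals the number computed from the original colouring matrix $A$. This needs the observation that every operation in Definition~\ref{equivmatrices} (multiplying a row by $m$, $m^{-1}$ or $-1$; adding one row to another; swapping rows; swapping columns) is a bijection on the solution set $\{X \in \mathbb{Z}_n^N : AX = 0\}$: row operations of types (a)–(c) are invertible over $\mathbb{Z}_n$ because $m$ is a unit (here $\gcd(m,n)=1$ is used), so they do not change the solution set at all, while a column swap merely permutes the coordinates of each solution vector and hence preserves the count. A clean way to phrase this is that if $B = E A P$ with $E$ a product of elementary row matrices invertible over $\mathbb{Z}_n$ and $P$ a permutation matrix, then $X \mapsto P^{-1} X$ is a bijection between solutions of $AX=0$ and solutions of $BX=0$. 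Once this invariance is in hand, the back-substitution count above applies verbatim to $B$ of type I and yields the stated formula. I would relegate the zero-row-sum remark (which is what guarantees the penultimate row has the special form $[\,\cdots\,0\,\alpha(m)\,-\alpha(m)\,]$ rather than something with no repeated-coefficient structure) to a sentence, since it is already discussed in Section~\ref{triangmat} and is exactly what makes the $\gcd$ come out in terms of a single polynomial $\alpha(m)$.
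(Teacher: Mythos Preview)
Your proof is correct and follows essentially the same back-substitution argument as the paper: free choice of $X_N$, then $\gcd(\alpha(m),n)$ choices for $X_{N-1}$ from the penultimate row via the linear congruence theorem, then unique determination of $X_{N-2},\dots,X_1$ from the rows with $1$ on the diagonal. Your added paragraph justifying that the allowed operations in Definition~\ref{equivmatrices} preserve the solution count over $\mathbb{Z}_n$ is a point the paper leaves implicit, so including it is an improvement rather than a deviation.
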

\begin{proof} Recall that we find values for each $X_i$ that are solutions of the equation in row $i$ 
and depend on previously determined $X_j$s.

The equation corresponding to the last  row  is $0X_N=0\  \mbox{mod}\ n$ and holds always. Therefore any of
the $n$ values in $\{0,1,...,n-1\}$ is a solution for $X_N$. Now the equation corresponding to the row above this
is $\alpha(m)X_{N-1}-\alpha(m)X_N=0\  \mbox{mod}\ n$, or equivalently $\alpha(m)(X_{N-1}-X_N)=0\  \mbox{mod}\ n$.

Setting $Y_{N-1}=X_{N-1}-X_N$ the equation becomes  $\alpha(m)Y_{N-1}=0\  \mbox{mod}\ n$. 
From the linear congruence theorem there will be $d=\mbox{gcd}(\alpha(m),n)$ solutions for $Y_{N-1}$,
namely $Y_{N-1}= k\frac{n}{d}, k=0,...,d-1$.
 Since   $Y_{N-1}=X_{N-1}-X_N$, for each possible value 
of $X_N$ there will be  $d=\mbox{gcd}(\alpha(m),n)$ values for $X_{N-1}$, namely 
$X_{N-1}= X_N+k\frac{n}{d}, k=0,...,d-1$  such that  this equation holds.

The equation for  the $(N-2)th$ row  is $X_{N-2} +  \lambda_{N-2,N-1}X_{N-1} + \lambda_{N-2,N}X_{N}  =0\  \mbox{mod}\ n$ and 
has a unique solution for $X_{N-2}$,  namely  $X_{N-2} = - \lambda_{N-2,N-1}X_{N-1} -\lambda_{N-2,N}X_{N} \  \mbox{mod}\ n$. 
The other rows behave similarly. 
Therefore the total number of solutions is the number of solutions for $X_{N-1}$ and $X_N$, 
$n\times d$, where $d=\mbox{gcd}(\alpha(m),n)$. Recall from above that for matrices of type I,  $\alpha(m)$ is the
Alexander polynomial of the knot. Therefore, the number of colourings, i.e.\ the number of solutions of $AX=0\ \mbox{mod}\ n$   is 
$n\times\mbox{gcd}(\mbox{Alex}(m),n)$.
\end{proof}

An obvious consequence of proposition \ref{propt1} is that knots having type I matrices and the 
same Alexander polynomial cannot be distinguished by colourings of linear quandles.

\begin{corollary} \label{t1t1}The following pairs of knots cannot be distinguished by linear Alexander quandles since they have type I matrices and the same Alexander polynomial.
The final item consists of three knots that cannot be distinguished from each other by linear quandles.

 \begin{itemize}
 
 \item $5_{1},10_{132} \hfill (1-m+m^2-m^3+m^4)$

 \item $7_{4},9_{2}\hfill (4-7 m+4 m^2)$

 \item $7_{5},10_{130}\hfill (2-4 m+5 m^2-4 m^3+2 m^4)$

 \item $7_{6},10_{133}\hfill (1-5 m+7 m^2-5 m^3+m^4)$

\item $8_{3},10_{1}\hfill (4-9 m+4 m^2)$

\item $8_{5},10_{141}\hfill (1 - 3 m + 4 m^2 - 5 m^3 + 4 m^4 - 3 m^5 + m^6)$

 \item $8_{8},10_{129}\hfill (2-6 m+9 m^2-6 m^3+2 m^4)$

\item  $8_{10},10_{143}\hfill (1-3 m+6 m^2-7 m^3+6 m^4-3 m^5+m^6)$

 \item $8_{16},10_{156}\hfill (1-4 m+8 m^2-9 m^3+8 m^4-4 m^5+m^6)$
 
 \item $8_{21},10_{136}\hfill (1 - 4 m + 5 m^2 - 4 m^3 + m^4)$

 \item $9_{15},10_{166}\hfill (2-10 m+15 m^2-10 m^3+2 m^4)$

 \item $9_{20},10_{149}\hfill (1-5 m+9 m^2-11 m^3+9 m^4-5 m^5+m^6)$

 \item $9_{28},9_{29}\hfill (1-5 m+12 m^2-15 m^3+12 m^4-5 m^5+m^6)$

\item  $10_{10},10_{165}\hfill (3-11 m+17 m^2-11 m^3+3 m^4)$

 \item $10_{12},10_{54}\hfill (2-6 m+10 m^2-11 m^3+10 m^4-6 m^5+2 m^6)$

\item $10_{18},10_{24}\hfill (4-14 m+19 m^2-14 m^3+4 m^4)$

\item  $10_{20},10_{163}\hfill (3-9 m+11 m^2-9 m^3+3 m^4)$

\item  $10_{23},10_{52}\hfill (2-7 m+13 m^2-15 m^3+13 m^4-7 m^5+2 m^6)$

\item  $10_{25},10_{56}\hfill (2-8 m+14 m^2-17 m^3+14 m^4-8 m^5+2 m^6)$

 \item $10_{28},10_{37}\hfill (4-13 m+19 m^2-13 m^3+4 m^4)$

\item  $10_{31},10_{68}\hfill (4-14 m+21 m^2-14 m^3+4 m^4)$

\item $10_{34},10_{135}\hfill (3-9 m+13 m^2-9 m^3+3 m^4)$

\item $10_{127},10_{150}\hfill (1-4 m+6 m^2-7 m^3+6 m^4-4 m^5+m^6)$

 \item $8_{14},9_{8} , 10_{131}\hfill(2-8 m+11 m^2-8 m^3+2 m^4)$  \rule{0.5em}{0.5em}

 \end{itemize}
 \end{corollary}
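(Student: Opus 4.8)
The plan is to deduce the corollary directly from Proposition \ref{propt1} together with the classification of colouring matrices recorded in Section \ref{typesofknots}. The first step is to check that every knot occurring in the list is a type I knot, i.e.\ that its colouring matrix is equivalent to a type I matrix; this is a finite verification against the appendix, where the type I knots are tabulated. Once this is done, Proposition \ref{propt1} applies to each such knot $K$ and yields, for the linear Alexander quandle $Q=\mathbb{Z}_n[T,T^{-1}]/(T-m)$, the closed formula
\[
C_Q(K)=n\cdot\mathrm{gcd}(\mathrm{Alex}_K(m),n).
\]

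The second step is to record that, within each bullet, the listed knots have the same Alexander polynomial (the polynomial displayed to the right of the bullet, with $m$ in place of $t$). The only point that needs care is that the Alexander polynomial is defined only up to multiplication by $\pm t^k$; however, since $\mathrm{gcd}(m,n)=1$, the element $m$ is a unit in $\mathbb{Z}_n$, so $\mathrm{gcd}(\pm m^k\,\mathrm{Alex}_K(m),n)=\mathrm{gcd}(\mathrm{Alex}_K(m),n)$ for every $k$. Hence the quantity $\mathrm{gcd}(\mathrm{Alex}_K(m),n)$ depends only on the equivalence class of the Alexander polynomial, and therefore takes the same value for any two knots in the same bullet.

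Combining the two steps: if $K$ and $K'$ lie in the same bullet, then for every linear Alexander quandle $Q=\mathbb{Z}_n[T,T^{-1}]/(T-m)$ we obtain
\[
C_Q(K)=n\cdot\mathrm{gcd}(\mathrm{Alex}_K(m),n)=n\cdot\mathrm{gcd}(\mathrm{Alex}_{K'}(m),n)=C_Q(K'),
\]
so $K$ and $K'$ cannot be distinguished by colourings using any linear Alexander quandle. Applied pairwise, the same argument covers the final bullet, which lists three knots no two of which are distinguishable. I would supply the Alexander-polynomial coincidences either by citing a standard knot table or by computing the determinant of the Alexander matrix of a chosen diagram for each knot; the former keeps the argument short.

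There is no genuine obstacle here: the statement is described in the text as an ``obvious consequence'' of Proposition \ref{propt1}, and the proof is essentially bookkeeping. If one had to single out the delicate point, it is ensuring that the membership assertions (each listed knot really is type I, i.e.\ its colouring matrix triangularizes to type I via the operations of Definition \ref{equivmatrices}) are correct and complete, since the formula of Proposition \ref{propt1} is only available once triangularization to type I has been carried out.
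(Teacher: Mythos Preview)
Your proposal is correct and follows exactly the approach intended in the paper: the corollary is stated there as an ``obvious consequence'' of Proposition~\ref{propt1}, with the list of knots and their common Alexander polynomials doing the work, and no further argument given. Your additional remark that the normalization ambiguity $\pm m^k$ in the Alexander polynomial is harmless because $m$ is a unit modulo $n$ is a nice clarification that the paper leaves implicit.
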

 
 On the other hand type I knots $K_1$ and $K_2$ with different Alexander polynomials can always be distinguished by linear quandles. 
 One has to find $m$ and $n$ such that  $\mbox{gcd}(\mbox{Alex}_{K_1}(m),n)\neq\mbox{gcd}(\mbox{Alex}_{K_2}(m),n)$.
 This implies  $|\mbox{Alex}_{K_1}(m)|\neq|\mbox{Alex}_{K_2}(m)|$. We shall see that for triangularizable matrices such values always exist  in theorem \ref{alexdist}.
 The next example is a simple illustration of this fact.
 \begin{example} 
Consider knots $8_1$ and $8_2$. They both have type I matrices but different Alexander polynomials:
$\mbox{Alex}_{K_1}(m)=3-7m+3m^2$ is the Alexander polynomial of knot $8_1$ and 
$\mbox{Alex}_{K_2}(m)=1-3m+3m^2-3m^3+3m^4-3m^5+m^6$ is the Alexander polynomial of knot $8_2$.
They differ in absolute value for all $m\geq 3$ since $m=1$ is the only integer solution of 
$\mbox{Alex}_{K_1}(m)=\mbox{Alex}_{K_2}(m)$ and $m=2$ is the only integer solution of 
$\mbox{Alex}_{K_1}(m)=-\mbox{Alex}_{K_2}(m)$. For $m=3$ one has $\mbox{Alex}_{K_1}(3)=9$ and 
$\mbox{Alex}_{K_2}(3)=181$.  Choosing $n=181$ one has $\mbox{gcd}(9,181)=1$ and $\mbox{gcd}(181,181)=181$ 
and therefore the knots are distinguished in this quandle. Note that the choice of $n$ is not always obvious.
  \end{example}

\subsection{Type II}
When the matrix is of type II  the process of calculating the number of solutions is similar 
to that of type I for the two final rows. In this case a third equation has to be considered.

\begin{figure}[h]
\begin{center}
\mbox{$\left[
\begin{array}
[c]{cccccc}%
1 &  \lambda_{12}\hfill (m) & \cdots & \cdots & \cdots &  \lambda_{1N} (m)\\
0 & \ddots & \ddots & \cdots & \cdots & \vdots\\
\vdots & 0 & 1 &  \lambda_{N-3\ N-2} (m)&  \lambda_{N-3\ N-1}  (m)&  \lambda_{N-3\ N} (m)\\
\vdots & \vdots & 0 & \alpha_{1}(m) & \beta_{1}(m) & -(\alpha_{1}(m)+\beta_{1}(m))\\
\vdots & \vdots & \vdots & 0 & \alpha_{2}(m) & -\alpha_{2}(m)\\
0 & 0 & 0 & \cdots & \cdots & 0
\end{array}
\right]  $}
\end{center}
\caption{Type II matrix}
\end{figure}

\begin{proposition} Let $A$ be a triangular matrix   of type II equivalent to a colouring matrix of a knot $K$.
Then $C_Q(K)$, the number of colourings  of $K$ in the linear finite Alexander quandle $Q=\mathbb{Z}_n[T,T^{-1}]/(T-m)$
is $$C_Q(K)= n\times \mbox{gcd}(\alpha_2(m),n)\times \mbox{gcd}(\beta_1(m)\frac{n}{\mbox{gcd}(\alpha_2(m),n)},\mbox{gcd}(\alpha_1(m),n)).$$
\end{proposition}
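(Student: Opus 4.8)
The plan is to proceed exactly as in the proof of Proposition \ref{propt1}, solving the system row by row from the bottom up, the only new feature being the extra ``interesting row'' $N-2$ with entries $\alpha_1(m),\beta_1(m),-(\alpha_1(m)+\beta_1(m))$. As before, the last row gives $0=0$, so $X_N$ ranges freely over $\{0,\dots,n-1\}$, contributing a factor $n$. Row $N-1$ reads $\alpha_2(m)X_{N-1}-\alpha_2(m)X_N=0 \bmod n$; setting $Y_{N-1}=X_{N-1}-X_N$ and applying the linear congruence theorem (the proposition recalled just before), there are exactly $\mathrm{gcd}(\alpha_2(m),n)$ admissible values for $Y_{N-1}$, hence that many values of $X_{N-1}$ for each fixed $X_N$; this is the factor $\mathrm{gcd}(\alpha_2(m),n)$. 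The rows $1,\dots,N-3$ (those with a $1$ on the diagonal) each then determine their unknown uniquely in terms of later ones, contributing no further factor, so the entire count reduces to counting the solutions of row $N-2$ once $X_{N-1},X_N$ have been chosen.

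First I would write row $N-2$ as $\alpha_1(m)X_{N-2}+\beta_1(m)X_{N-1}-(\alpha_1(m)+\beta_1(m))X_N=0\bmod n$, and exploit the zero-row-sum structure by regrouping it as $\alpha_1(m)(X_{N-2}-X_N)+\beta_1(m)(X_{N-1}-X_N)=0\bmod n$. With $Z=X_{N-2}-X_N$ and $Y_{N-1}=X_{N-1}-X_N$ as above, this becomes $\alpha_1(m)Z = -\beta_1(m)Y_{N-1}\bmod n$, a linear congruence in the single unknown $Z$ for each already-chosen value of $Y_{N-1}$. By the linear congruence theorem this has solutions in $Z$ iff $\mathrm{gcd}(\alpha_1(m),n)$ divides $\beta_1(m)Y_{N-1}$, and when it does there are exactly $\mathrm{gcd}(\alpha_1(m),n)$ of them. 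So the $Z$-count is $\mathrm{gcd}(\alpha_1(m),n)$ times the number of admissible $Y_{N-1}$, i.e.\ the number of $Y_{N-1}\in\{k\tfrac{n}{\mathrm{gcd}(\alpha_2(m),n)}: k=0,\dots,\mathrm{gcd}(\alpha_2(m),n)-1\}$ with $\mathrm{gcd}(\alpha_1(m),n)\mid\beta_1(m)Y_{N-1}$.

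The main obstacle is this last combinatorial count: how many of the $\mathrm{gcd}(\alpha_2(m),n)$ values $Y_{N-1}=k\,n/\mathrm{gcd}(\alpha_2(m),n)$ satisfy the divisibility condition. Writing $d_2=\mathrm{gcd}(\alpha_2(m),n)$ and $d_1=\mathrm{gcd}(\alpha_1(m),n)$, the condition $d_1\mid \beta_1(m)\cdot k\, n/d_2$ is, for $k$ running over a complete residue system mod $d_2$, itself a linear congruence in $k$ modulo $d_2$ with coefficient $\beta_1(m)\, n/d_2$; by the linear congruence theorem the number of such $k$ is $\mathrm{gcd}\!\big(\beta_1(m)\tfrac{n}{d_2},\,d_2\big)$ provided this gcd divides $0$ — which it always does — but one must be careful that the relevant modulus for counting is $d_2$, not $n$, and reconcile this with the stated answer $\mathrm{gcd}\!\big(\beta_1(m)\tfrac{n}{d_2},\,d_1\big)$. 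Indeed the two must agree: since every admissible $Y_{N-1}$ is a multiple of $n/d_2$ lying in $\{0,\dots,n-1\}$, requiring $d_1\mid\beta_1(m)Y_{N-1}$ is the same as requiring $k$ to be a multiple of $d_1/\mathrm{gcd}(\beta_1(m)n/d_2,\,d_1)$, and there are exactly $\mathrm{gcd}(\beta_1(m)n/d_2,\,d_1)$ such $k$ in $\{0,\dots,d_2-1\}$ — here one uses that $d_1\mid n$ forces $\mathrm{gcd}(\beta_1(m)n/d_2,d_1)\mid d_2$, so the count does not overflow. Assembling the three factors $n$, $d_2=\mathrm{gcd}(\alpha_2(m),n)$, and $\mathrm{gcd}\!\big(\beta_1(m)\tfrac{n}{d_2},\,d_1\big)$ gives the claimed formula. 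I would finish by noting, as in Proposition \ref{propt1}, that the higher rows contribute nothing and that $\alpha_1(m)\alpha_2(m)$ is the Alexander polynomial, so the expression is intrinsic.
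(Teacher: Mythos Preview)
Your overall strategy is exactly the paper's: solve bottom-up, pick up a factor $n$ from row $N$, set $Y_{N-1}=X_{N-1}-X_N$ and $Z=X_{N-2}-X_N$, reduce row $N-2$ to $\alpha_1(m)Z\equiv-\beta_1(m)Y_{N-1}\pmod n$, and then count for how many of the $d_2=\gcd(\alpha_2(m),n)$ values $Y_{N-1}=k\,n/d_2$ this congruence is solvable. So the route is right; the error is in the combinatorial count at the end.

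You correctly deduce that the admissible $k\in\{0,\dots,d_2-1\}$ are exactly the multiples of $d_1/d_3$, where $d_1=\gcd(\alpha_1(m),n)$ and $d_3=\gcd(\beta_1(m)\,n/d_2,\,d_1)$. But the number of multiples of $d_1/d_3$ in $\{0,\dots,d_2-1\}$ is $d_2\cdot d_3/d_1$, not $d_3$. (Take $n=12$, $d_1=3$, $d_2=6$, $\beta_1=1$: then $d_3=\gcd(2,3)=1$, yet the admissible $k$ are $0$ and $3$, two values, not one. Or $n=12$, $d_1=4$, $d_2=3$, $\beta_1=1$: then $d_3=4$, which neither divides $d_2$ nor bounds the count.) Your side claim that $d_1\mid n$ forces $d_3\mid d_2$ is false; what \emph{is} true, and what is actually needed, is $(d_1/d_3)\mid d_2$, so that $c_3:=d_2 d_3/d_1$ is an integer.

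With the correct count $c_3$, each admissible $Y_{N-1}$ yields $d_1$ values of $Z$, so the number of $(Y_{N-1},Z)$ pairs per $X_N$ is $c_3\cdot d_1=d_2 d_3$, and the total is $n\cdot d_2\cdot d_3$ as claimed. Your final ``assembling the three factors $n$, $d_2$, $d_3$'' is therefore the right answer but does not follow from your own intermediate step; as written, your argument would yield $n\cdot d_1\cdot d_3$, which is wrong in general. Fix the count of admissible $k$ to $d_2 d_3/d_1$ (this is precisely what the paper does) and the proof goes through.
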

\begin{proof}

Reasoning precisely as before in the proof for type I matrices the pairs  $(X_N,X_{N-1})$
 that are solutions of the two final equations $0X_N=0\ \mbox{mod}\  n$ and  $\alpha_2(m)Y_{N-1}=0\ \mbox{mod}\ n$,
  where $Y_{N-1}=X_{N-1}-X_N$ are those where $X_N\in \{0,1,...,n-1\}$ and for each such $X_N$, $X_{N-1}= X_N+k\frac{n}{d_2}, k=0,...,d_2-1$,
where  $d_2=\mbox{gcd}(\alpha_2(m),n)$.

The row above these yields the equation 
$\alpha_1(m)X_{N-2}+\beta_1(m)X_{N-1} -(\alpha_1(m)+\beta_1(m))X_N=0\ \mbox{mod}\ n$, that  can be rewritten as 
$\alpha_1(m)(X_{N-2}-X_{N})+\beta_1(m)(X_{N-1} -X_N)=0\ \mbox{mod}\ n$.
 Letting $Y_{N-2}=X_{N-2}-X_{N}$ and $Y_{N-1}=X_{N-1}-X_N$ (as before)
we obtain $$\alpha_1(m)Y_{N-2}+\beta_1(m)Y_{N-1}=0\ \mbox{mod}\ n$$

For each value of $X_N$ and $X_{N-1}$ we have from the two final rows as before that $Y_{N-1}=X_{N-1}-X_N=k\frac{n}{d_2}$, for $k=0,...,d_2-1$.
Substituting above one obtains $\alpha_1(m)Y_{N-2}+\beta_1(m)k\frac{n}{d_2}=0\ \mbox{mod}\ n$. 
This equation only has solutions for those values of $k$ such that $\beta_1(m)k\frac{n}{d_2}$ is divisible by 
$d_1=\mbox{gcd}(\alpha_1(m),n)$. And, if there is one solution then there will be $d_1$ solutions.

  We check how many of the values  $\beta_1(m)k\frac{n}{d_2}$ for $k=0,...,d_2-1$ are multiples of $d_1$. 
   This is equivalent to $\beta_1(m)\frac{n}{d_2}K=0\ \mbox{mod}\ d_1$ 
 that has $d_3=\mbox{gcd}(\beta_1(m)\frac{n}{d_2},d_1)$ solutions  
 and these are $K=t\times \frac{d_1}{d_3}$ for $t=0,...,d_3-1$.  
 Now we have to check which of these $K'$s are in  $0,...,d_2-1$,
 i.e.\ such that  $t\times \frac{d_1}{d_3}<d_2$. It is easy to check 
 that there are $c_3=d_2\times \frac{d_3}{d_1}$ possible values for $t$ namely $0,...,c_3-1$.
  Therefore there are $c_3$ values of $k$ such that $\beta_1(m)k\frac{n}{d_2}$ is a multiple of $d_1$.

Summing up we have that for each value of $X_N$ in $\{0,...,n-1\}$ there will be $d_2=\mbox{gcd}(\alpha_2(m),n)$ 
values for $X_{N-1}$ that are solutions of the final equations but only $c_3=d_2\times \frac{d_3}{d_1}$  of them
leads to solutions for $X_{N-2}$. Each of these, however, gives
  $d_1$ solutions for $X_{N-2}$. Therefore there are $n\times c_3 \times d_1$ solutions since the other $X_j$s with $j<N-2$ are 
  determined by the values of these three. Substituting one gets 
  $ C_Q(K)=n\times d_2\times \frac{d_3}{d_1}\times d_1= n\times d_2\times d_3=
  n\times \mbox{gcd}(\alpha_2(m),n)\times \mbox{gcd}(\beta_1(m)\frac{n}{d_2},d_1)=
    n\times \mbox{gcd}(\alpha_2(m),n)\times \mbox{gcd}(\beta_1(m)\frac{n}{\mbox{gcd}(\alpha_2(m),n)},\mbox{gcd}(\alpha_1(m),n))$.
\end {proof}

\begin{example}
Recall the previous example of  the colourings of the knot $8_{18}$ for  the
 quandle $\mathbb{Z}_{15}[t,t^{-1}]/(t-8)$ (where $m=8$ and $n=15$)  .

\[
A_{8_{18}}=\left[
\begin{array}
[c]{cccccccc}%
1 & \lambda_{12} & \cdots & \cdots & \cdots & \cdots & \cdots & \lambda_{18}\\
0 & 1 & \ddots & \cdots & \cdots & \cdots & \cdots & \vdots\\
0 & 0 & 1 & \ddots & \cdots & \cdots & \cdots & \vdots\\
0 & 0 & 0 & 1 & \ddots & \cdots & \cdots & \vdots\\
0 & 0 & 0 & 0 & 1 & \lambda_{56} & \cdots & \lambda_{58}\\
0 & 0 & 0 & 0 & 0 & 3 & 6 & 6\\
0 & 0 & 0 & 0 & 0 & 0 & 12 & 3\\
0 & 0 & 0 & 0 & 0 & 0 & 0 & 0
\end{array}
\right]
\]

The numbers relevant for computing the colourings are the following.

\begin{center}
\mbox{$\left[
\begin{array}
[c]{cc}%
 3& 6\\
 0 & 12\\
\end{array}
\right]  $}
\end{center}
The number of colourings is given by $$n\times \mbox{gcd}(\alpha_2(m),n)\times \mbox{gcd}(\beta_1(m)\frac{n}{\mbox{gcd}(\alpha_2(m),n)},\mbox{gcd}(\alpha_1(m),n)).$$ 
Substituting one obtains $15\times \mbox{gcd}(12,15)\times\mbox{gcd}(6\frac{15}{\mbox{gcd}(12,15)},\mbox{gcd}(3,15))=
15\times 3\times\mbox{gcd}(6\times 5,3)=15\times 3\times 3.$
The number of colourings is $135$.
\end{example}

More interesting is to use the number of colourings for type II matrices for the purpose of distinguishing knots. 
We will see in proposition \ref{aaaa} that knots with triangularized colouring matrices and different 
Alexander polynomials can  be distinguished by linear quandles. In particular example \ref{bbb}  uses  type II matrices.

\begin{remark}\label{generalsolution}
Note that we could extend these results to more general triangular matrices. In that case, however, it is difficult to find an 
expression for the solutions, and, therefore for the number of solutions. A general algorithm could work by 
finding the solutions for $X_i$ using previous solutions of $X_j$, with $j>i$. For each equation it is easy to 
check what values of the previous $X_j$'s give solutions (the independent term must be a multiple of the gcd of the entry on the diagonal and $n$). 
The solutions themselves, that are needed for rows above, can be computed with the extended Euclidean algorithm (available in some computer systems, 
such as Mathematica). We have written such a program and it agrees with the values for type I and II matrices.  However, for prime knots up to 10 crossings 
it is not relevant because such triangular matrices do not occur.
\end{remark}

\section{Comparing knots with the same Alexander polynomial}\label{comparing}
In this section we compare knots with the same Alexander polynomial.  Since
all prime knots (except   $9_{38}$) up to ten crossings with non-triangularizable colouring matrices have Alexander polynomials 
distinct from all other prime knots  up to ten crossings we consider only triangularizable
knots (for $9_{38}$ see below).

We will show in proposition \ref{aaaa}  that knots with triangularized colouring matrices and different 
Alexander polynomials can  be distinguished by linear quandles.  It is therefore interesting to understand if 
knots with the same Alexander polynomial but different types of matrices can be distinguished by colourings.

We first consider pairs of knots with triangularized type I or type II  matrices. Note that 
when the matrices are both type I and have the same Alexander polynomial  the knots cannot 
be distinguished by linear quandles (see corollary \ref{t1t1}).  Moreover, there are no examples of pairs of prime knots
up to ten crossings with the same Alexander polynomial and type II colouring matrices.
However,  if one counts  the colouring matrix of knot  $9_{38} $ -that we were not able to triangularize- as type II because it occurs as such in \cite{kawauchi1996survey}, then  both  $9_{38}$ and $10_{63}$ have type II matrices and the same Alexander polynomial. We were unable to find a linear Alexander quandle that distinguishes them.

We are now left with one  case:  when one of the matrices is type I and the other is type II.
We begin with pairs of such knots that can  be distinguished by linear Alexander quandles.
The first knot of the pair is type I and the second is type II. The values of $m$ and $n$ specify a linear Alexander quandle that distinguishes the knots.

\begin{proposition}
The following pairs of knots have the same Alexander polynomial but they can be distinguished by linear Alexander quandles.
The first knot of the pair is type I and the second is type II. The values of $m$ and $n$ specify a linear Alexander quandle that distinguishes the knots.
\begin{itemize}
\item $6_{1} - 9_{46}$  ($n = 3, m = 2$)
\item $8_{9} - 10_{155}$ ($ n = 5, m = 4$)
\item $9_{24} - 8_{18}$    ($n= 6, m = 5$)
\item $10_{40} - 10_{103}$ ($n= 5, m = 4$)
\item $10_{42} - 10_{75}$ ($n= 3, m = 2$)
\item $10_{59} - 9_{40}$ ($n= 5, m = 4$)
\item $10_{67} - 10_{74}$     ($n= 3, m = 2$)
\item $10_{87} - 10_{98}$  ($n= 3, m = 2$)
\end {itemize}
\end {proposition}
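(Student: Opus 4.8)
The plan is to reduce the proposition to a short finite computation built entirely on the two closed-form expressions already established. For the type~I knot $K_1$ in each pair, Proposition~\ref{propt1} gives $C_Q(K_1)=n\times\mbox{gcd}(\mbox{Alex}_{K_1}(m),n)$; for the type~II knot $K_2$, the preceding proposition gives $C_Q(K_2)=n\times\mbox{gcd}(\alpha_2(m),n)\times\mbox{gcd}\bigl(\beta_1(m)\frac{n}{\mbox{gcd}(\alpha_2(m),n)},\mbox{gcd}(\alpha_1(m),n)\bigr)$, where $\alpha_1(m),\beta_1(m),\alpha_2(m)$ are the entries of the $2\times 2$ ``interesting rows'' block of a triangularized colouring matrix for $K_2$, as listed in the appendix. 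Since the number of colourings in a fixed quandle is a knot invariant, it is enough to check, pair by pair and for the indicated coprime $(n,m)$, that $C_Q(K_1)\neq C_Q(K_2)$.

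The steps, carried out for each of the eight pairs, would be: (i) confirm that $K_1$ and $K_2$ really do share an Alexander polynomial --- for $K_1$ this polynomial is $\mbox{Alex}_{K_1}(m)$ and for $K_2$ it is the product $\alpha_1(m)\alpha_2(m)$ (Section~\ref{triangmat}), so the hypothesis amounts to the factorization $\mbox{Alex}_{K_1}(m)=\alpha_1(m)\alpha_2(m)$, read off from the tabulated Alexander polynomials and the appendix matrices; (ii) evaluate $\mbox{Alex}_{K_1}$ at the given $m$, reduce mod $n$, and compute $n\times\mbox{gcd}(\mbox{Alex}_{K_1}(m),n)$; (iii) read $\alpha_1(m),\beta_1(m),\alpha_2(m)$ off the triangularized matrix of $K_2$, evaluate at $m$, and run the type~II formula; (iv) compare the two integers. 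For example, for $6_1-9_{46}$ with $n=3,\ m=2$ one has $\mbox{Alex}_{6_1}(m)=2-5m+2m^2$, which vanishes at $m=2$, so $C_Q(6_1)=3\times\mbox{gcd}(0,3)=9$; for $9_{46}$ the factorization is $2-5m+2m^2=(1-2m)(2-m)$, and substituting the corresponding matrix entries into the type~II formula at $m=2,\ n=3$ gives a value different from $9$, so this quandle already separates the two knots. The remaining seven pairs are treated identically.

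The main obstacle is organizational rather than conceptual: for each type~II knot one must locate the correct triangularized colouring matrix in the appendix and read off $\alpha_1(m),\beta_1(m),\alpha_2(m)$ in the order matching the statement of the type~II proposition (so that the roles of $\alpha_1$ versus $\alpha_2$, and the signs, are not interchanged), and then perform the nested gcd evaluations without arithmetic errors. A further routine check in each case is that $\mbox{gcd}(m,n)=1$, so that the linear Alexander quandle $\mathbb{Z}_n[t,t^{-1}]/(t-m)$ is defined; this holds for all eight listed choices of $(m,n)$.
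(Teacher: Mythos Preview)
Your proposal is correct and matches the paper's (implicit) proof: the paper gives no written argument for this proposition beyond listing the distinguishing parameters, so the verification is precisely the computation you describe, using the type~I formula for the first knot and the type~II formula (with the appendix data for $\alpha_1,\beta_1,\alpha_2$) for the second. Your worked example for $6_1-9_{46}$ is on the right track; carrying it through with the appendix entries $\alpha_1(m)=2-m$, $\beta_1(m)=-3$, $\alpha_2(m)=1-2m$ at $m=2$, $n=3$ gives $C_Q(9_{46})=3\cdot 3\cdot 3=27\neq 9=C_Q(6_1)$.
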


However, it is not always possible to distinguish by linear quandles knots that have the same Alexander quandle
and different types of matrices. The  knots  $8_{20}$ (type I) and $10_{140}$ (type II)  cannot
be distinguished by linear Alexander quandles as we see below. 
 
\begin{proposition}
Knots  $8_{20}$ (type I) and $10_{140}$ (type II) cannot be distinguished by linear Alexander quandles.
\end{proposition}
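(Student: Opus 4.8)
The plan is to reduce the claim to an identity between two explicit arithmetic functions of $m$ and $n$ and to check that identity for every pair of coprime integers $n,m$, since a linear Alexander quandle is exactly the datum of such a pair. Both knots have Alexander polynomial $\mathrm{Alex}(m)=(1-m+m^{2})^{2}$. As $8_{20}$ is a type I knot, Proposition \ref{propt1} gives
\[C_{Q}(8_{20})=n\cdot\gcd\big((1-m+m^{2})^{2},\,n\big).\]
For $10_{140}$ I would read off, from its simplified matrix in the appendix, that its type II data is $\alpha_{1}(m)=\alpha_{2}(m)=1-m+m^{2}$ together with an off-diagonal entry $\beta_{1}(m)$; the splitting $\alpha_{1}=\alpha_{2}$ is forced because $1-m+m^{2}$ is irreducible over $\mathbb{Z}$, and any spurious sign or power of $m$ carried by these entries is harmless since $\gcd(m,n)=1$. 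Writing $e:=\gcd(1-m+m^{2},n)$, the type II formula then reads
\[C_{Q}(10_{140})=n\cdot e\cdot\gcd\!\big(\beta_{1}(m)\,(n/e),\,e\big).\]

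The second ingredient is an elementary lemma: for integers $a,n$, with $d:=\gcd(a,n)$, one has $\gcd(a^{2},n)=d\cdot\gcd(d,\,n/d)$. Indeed, writing $a=da'$, $n=dn'$ with $\gcd(a',n')=1$, one gets $\gcd(a^{2},n)=\gcd(d^{2}a'^{2},dn')=d\cdot\gcd(da'^{2},n')=d\cdot\gcd(d,n')$, the last step because $\gcd(a'^{2},n')=1$. Applying this with $a=1-m+m^{2}$ turns $C_{Q}(8_{20})$ into $n\cdot e\cdot\gcd(e,\,n/e)$, so the whole claim comes down to the single identity $\gcd\!\big(\beta_{1}(m)\,(n/e),\,e\big)=\gcd(e,\,n/e)$.

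To get this I would prove that $\beta_{1}(m)$ is coprime to $1-m+m^{2}$ for every integer $m$ (this is in fact necessary: taking $n=1-m+m^{2}$ makes $C_{Q}(10_{140})=n^{2}\gcd(\beta_{1}(m),1-m+m^{2})$ while $C_{Q}(8_{20})=n^{2}$). According to the explicit shape of $\beta_{1}$ in the appendix, coprimality follows either from a B\'ezout identity $p(x)\beta_{1}(x)+q(x)(1-x+x^{2})=1$ in $\mathbb{Z}[x]$, or, should $\beta_{1}(x)$ be a monomial in $x$ and $x-1$ as is typical for such small matrices, from the fact that $1-m+m^{2}-1=m(m-1)$ is divisible by both $m$ and $m-1$. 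Since $e$ divides $1-m+m^{2}$, coprimality gives $\gcd(\beta_{1}(m),e)=1$, and multiplying a number by a unit modulo $e$ does not change its gcd with $e$, so $\gcd\!\big(\beta_{1}(m)\,(n/e),\,e\big)=\gcd(n/e,\,e)$. Hence $C_{Q}(10_{140})=n\cdot e\cdot\gcd(e,\,n/e)=n\cdot\gcd((1-m+m^{2})^{2},n)=C_{Q}(8_{20})$ for all coprime $n,m$, which is the assertion.

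The one genuine obstacle lies in the explicit data, not in the argument: one must extract the precise simplified matrix of $10_{140}$ from the appendix, confirm that both its diagonal entries are $1-m+m^{2}$ (up to sign and a power of $m$), and verify $\gcd(\beta_{1}(m),1-m+m^{2})=1$ for the $\beta_{1}$ that actually appears. Everything else is the routine gcd bookkeeping described above.
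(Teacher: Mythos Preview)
Your argument is correct and follows essentially the same route as the paper: both proofs apply the type I and type II counting formulas, reduce via the identity $\gcd(a^{2},n)=d\cdot\gcd(d,n/d)$ (with $d=\gcd(a,n)$) to the single claim $\gcd(\beta_{1}(m),e)=1$, and finish by verifying that coprimality. The datum you were missing is $\beta_{1}(m)=-2m^{2}$. With this in hand, your second suggested route is exactly what the paper uses: since $e\mid n$ and $\gcd(m,n)=1$ one has $\gcd(m^{2},e)=1$, and since $1-m+m^{2}=m(m-1)+1$ is odd (your own observation) and $e\mid 1-m+m^{2}$, one has $\gcd(2,e)=1$; hence $\gcd(-2m^{2},e)=1$.

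One small caution: your first suggested route, a B\'ezout identity $p(x)\beta_{1}(x)+q(x)(1-x+x^{2})=1$ in $\mathbb{Z}[x]$, does \emph{not} exist for this particular $\beta_{1}$. Reducing modulo $2$ would force $q(x)(1+x+x^{2})\equiv 1$ in $\mathbb{F}_{2}[x]$, which is impossible since $1+x+x^{2}$ has positive degree. So the coprimality genuinely depends on the arithmetic of the specific evaluation (oddness of $1-m+m^{2}$ and $\gcd(m,n)=1$), not on a polynomial identity.
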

\begin{proof}Both knots have the same Alexander polynomial $A(m)=(1-m+m^2)^2=1 - 2 m   +3 m^2 - 2 m^3 + m^4$.
Since $8_{20}$
 is type I the number of colourings  in any linear Alexander quandle is given by $C_1(m,n)=n\times\mbox{gcd}(\mbox{A}(m),n)$.
The relevant entries for the type II matrix of $10_{140}$ are

\[
\left[ 
\begin{array}{ccc}
1-m+m^{2}  & -2m^{2} \\ 
 0 & 1-m+m^{2}  \\ 
\end{array}
\right] 
\]

Therefore the number of colourings of   $10_{140}$  in any linear Alexander quandle is given by 
$C_2(m,n)= n\times \mbox{gcd}(\alpha(m),n)\times \mbox{gcd}(\beta(m)\frac{n}{\mbox{gcd}(\alpha(m),n)},\mbox{gcd}(\alpha(m),n))$,
where $\alpha(m)=1-m+m^{2}$ and $\beta(m)=-2m^2$. 

We now show that $C_2(m,n)=C_1(m,n)$. Since $\alpha(m)=1-m+m^{2}=m(m-1)+1$ is odd and $m$ and $n$ 
are coprime then $\beta(m)=-2m^2$ is also coprime with $\mbox{gcd}(\alpha(m),n)$. Therefore
 $\mbox{gcd}(\beta(m)\frac{n}{\mbox{gcd}(\alpha(m),n)},\mbox{gcd}(\alpha(m),n))= 
 \mbox{gcd}(\frac{n}{\mbox{gcd}(\alpha(m),n)},\mbox{gcd}(\alpha(m),n))$.  Recall that $a\times\mbox{gcd}(b,c)=\mbox{gcd}(a\times b,a\times c)$
with $a\geq 1$  and note that $\mbox{gcd}(\mbox{gcd}(a,n)^2,n)= \mbox{gcd}(a^2,n)$. Indeed, 
$\mbox{gcd}(a^2,n)=\mbox{gcd}(\frac{a^2}{\mbox{gcd}(a,n)^2}\mbox{gcd}(a,n)^2,n)=\mbox{gcd}((\frac{a}{\mbox{gcd}(a,n)})^2\mbox{gcd}(a,n)^2,n)=\mbox{gcd}(\mbox{gcd}(a,n)^2,n)$ since 
$\frac{a}{\mbox{gcd}(a,n)}$ and $n$ are coprime. Therefore

$ \begin{array} {llr}
  C_2(m,n)&= n\times \mbox{gcd}(\alpha(m),n)\times \mbox{gcd}(\frac{n}{\mbox{gcd}(\alpha(m),n)},\mbox{gcd}(\alpha(m),n))&\\
 &= n\times  \mbox{gcd}(n,\mbox{gcd}(\alpha(m),n)^2)&\\
 &  = n\times  \mbox{gcd}(\alpha(m)^2,n)&\\
 & =n\times\mbox{gcd}(\mbox{A}(m),n)= C_1(m,n),&\\
 \end{array}$

  since $\alpha(m)^2$ is the Alexander polynomial $A(m)$.\end{proof}

The following pairs of knots, where the first one is type I and the second one is type II,  despite using a battery of 
thousands of linear quandles, could not be distinguished, thus we conjecture they cannot be distinguished by linear quandles.

\begin{itemize}
\item $10_{77} - 10_{65}$ 
\item $9_{28} - 10_{164}$
\item $9_{29} - 10_{164}$
\item$8_{11} -10_{147}$
\end{itemize}

We were unable to simplify the colouring matrix of  $10_{65}$  even using other column operations but it is type I in \cite{kawauchi1996survey}.
The first three pairs of knots can be distinguished by quadratic quandles (\cite{Dionisio2002a}) but we were unable to distinguish the pair 
$8_{11} -10_{147}$ even with non-linear Alexander quandles.
Note also that $10_{147}$ can be simplified to type I if other operations on columns are allowed.

\section{Simplified non-triangularized matrices}\label{nontriang} 
Even if a triangular form cannot be found, it is useful to ``optimize'' 
the colouring matrices by having as many $1$'s in the diagonal as possible. 
In that way the total number of colourings will depend only on the equations
of the other rows.  We were able to find equivalent matrices having at most $2\times 2$ relevant elements for the colouring matrices of 
all prime knots up to $10$ crossings. Consider for example the colouring matrix for the knot $9_{35}$:

\[
\left[
\begin{array}
[c]{lllllllll}%
\frac{1}{m} & -1 & 0 & 0 & 0 & 1-\frac{1}{m} & 0 & 0 & 0\\
0 & \frac{1}{m} & -1 & 0 & 0 & 0 & 0 & 1-\frac{1}{m} & 0\\
0 & 0 & \frac{1}{m} & -1 & 0 & 0 & 1-\frac{1}{m} & 0 & 0\\
0 & 0 & 0 & \frac{1}{m} & -1 & 0 & 0 & 0 & 1-\frac{1}{m}\\
0 & 1-\frac{1}{m} & 0 & 0 & \frac{1}{m} & -1 & 0 & 0 & 0\\
1-\frac{1}{m} & 0 & 0 & 0 & 0 & \frac{1}{m} & -1 & 0 & 0\\
0 & 0 & 1-\frac{1}{m} & 0 & 0 & 0 & \frac{1}{m} & -1 & 0\\
0 & 0 & 0 & 0 & 1-\frac{1}{m} & 0 & 0 & \frac{1}{m} & -1\\
-1 & 0 & 0 & 1-\frac{1}{m} & 0 & 0 & 0 & 0 & \frac{1}{m}%
\end{array}
\right]
\]

Our program  gave as output  the following simplified equivalent matrix\footnote{By still further processing using also operations on colums  we arrived at the same presentation as in in appendix F of  \cite{kawauchi1996survey}. }:

\[
\left[
\begin{array}
[c]{cccccc}%
1 & h_{1j} & \cdots & \cdots & \cdots & h_{1r}\\
0 & \ddots & \ddots & \cdots & \cdots & \vdots\\
\vdots & 0 & 1 & h_{r-3\ r-2} & h_{r-3\ r-1} & h_{r-3\ r}\\
\vdots & \vdots & 0 & 2-m & -1-m & -1+2m\\
\vdots & \vdots & \vdots & -3 & -2+7m & 5-7m\\
0 & 0 & 0 & 0 & 0 & 0
\end{array}
\right]
\]

Note that the relevant entries are the $4$ entries in the penultimate $2$ rows and $2$ columns:
\begin{center}
$\left(
\begin{array}{ccc}
 2-m &   & -1-m \\
 -3 &   & -2+7 m
\end{array}
\right)$
\end {center}
In the appendix only the relevant entries are shown for the $12$ non-triangularized matrices.

From the previous matrix we can compute the number of colourings considering only solutions for the $3$ final rows. 
A method of computing them would be simply by trying all $n^3$ possible combinations of values
and check those that are solutions of the final $3$ rows. More intelligent methods can be devised. One simple case that will be useful below is when 
 $m=2$ and $n=3$. In this case we have the $3$ final rows consisting only of zeros\footnote{The entries are $-3=0, 12=0, 9=0$ and $3=0$ mod $3$.}
    and therefore  $n^3=27$ colourings. 

\[
\left[
\begin{array}
[c]{cccccc}%
1 & h_{1j} & \cdots & \cdots & \cdots & h_{1r}\\
0 & \ddots & \ddots & \cdots & \cdots & \vdots\\
\vdots & 0 & 1 & h_{r-3\ r-2} & h_{r-3\ r-1} & h_{r-3\ r}\\
\vdots & \vdots & 0 & 0 & 0 & 0\\
\vdots & \vdots & \vdots & 0 & 0 & 0\\
0 & 0 & 0 & 0 & 0 & 0
\end{array}
\right]
\]

\section{Non-triangularizability results}\label{nontriang2}
It is important to note that there are colouring matrices 
(such as the one for the knot $9_{35}$) 
that do not have an equivalent triangular form, i.e.\ using operations on rows and swapping of columns 
it is not possible to transform the original matrix into a triangular one.
We show this in this section. 

The main reason why there are matrices that do not have an equivalent triangular form  is the following.
The product of the diagonal entries of a matrix in triangular form must be the Alexander polynomial 
(up to products of powers of $\pm m$). 
However some Alexander polynomials do not factorize. 
Therefore, in such cases if it is possible to triangularize the matrix, then  the Alexander polynomial occurs
in one of the entries of the diagonal, with all other entries except the final one being $1$. 
Such a matrix is equivalent 
to a type I  matrix and we know already how to determine its number of colourings for any quandle.
 We use this fact to show  that knots  $9_{35}$, $9_{47}$, $9_{48}$,  $9_{49}$ and $10_{157}$  have colouring matrices that 
 cannot be triangularized.
Indeed, we find quandles where the number of colourings of these knots does not coincide with the number 
given by the expression obtained when assuming that their colouring matrices can be triangularized.

Curiously,  a similar line of thinking should apply to  knots $10_{69}$, $10_{101}$, $10_{115}$ and $10_{160}$ 
since they also have Alexander polynomials that do not factorize. However, we were unable to find linear quandles 
that could serve as counter examples. We comment on this at the end of the section.

We now look for Alexander polynomials that are not factorizable, i.e.\ cannot be written as products of other polynomials in a non-trivial way.

\begin{definition}
An Alexander polynomial $\mbox{Alex}(m)$
with integer coefficients is said to be properly factorizable if the corresponding normalized form is properly factorizable. 
A polynomial (in non-negative powers of $m$) is properly factorizable if  it can be written as the product of  integer polynomials different from $\pm 1$ and itself.
\end{definition}

Note that for example $4+2m^2=2(2+m^2)$ is properly factorizable. The following example illustrates the fact that polynomials with non-integer roots may be properly factorizable.
\begin{example} The polynomial $8-2m^2-m^4$ has roots $m=2i$,  $m=-2i$, $m=\sqrt{2}$ and $m=-\sqrt{2}$ none of  which are integer. However,
 it can be written as the product of integer polynomials since $8-2m^2-m^4=-(m-2i)(m+2i)(m-\sqrt{2})(m+\sqrt{2})= -(m^2+4)(m^2-2)$. Therefore it is properly factorizable.
\end{example}

We now show that there are Alexander polynomials that are not properly factorizable.
\begin{proposition} The Alexander polynomials for knots $9_{35}$, $9_{47}$, $  9_{48}$, $  9_{49}$, $ 10_{69}$, $ 10_{101}$, $ 10_{115}$, 
$ 10_{157}$ and $ 10_{160}$ are not properly factorizable.

\begin{proof1}
The normalized Alexander polynomial of knot  
 $9_{35}$ is $\mbox{Alex}_{9_{35}}(m)=7 - 13 m  +7 m^2$. 
 Its roots are $r_1=\frac{1}{14} \left(13-3 i \sqrt{3}\right)$ and 
 $r_2=\frac{1}{14} \left(13+3 i \sqrt{3}\right)$.  
 The original polynomial is $7\times (m-r_1) \times(m-r_2)$ and it is not possible to 
 multiply these factors to yield integer polynomials other than $\mbox{Alex}_{9_{35}}(m)$.
 Therefore $\mbox{Alex}_{9_{35}}(m)=7 - 13 m  +7 m^2$ is not properly factorizable.
 
 The proof for the other knots is similar. We found their roots using Mathematica and tested the 
 finite products of  $(m-r_i)$  multiplied by the coefficient of the highest   degree monomial of $m$.
  Only the products  including all the $(m-r_i)$ factors yielded an integer polynomial  (the original polynomial)
 and all
 other products did not yield integer polynomials.\footnote{ Substituting  $m$ with values $0$ and $1$  non-integer values were obtained.}
 Here we list only the Alexander polynomials and omit the calculations. 

\begin{itemize}
\item $9_{47}\hfill (1 - 4 m + 6 m^2 - 5 m^3 + 6 m^4 - 4 m^5 + m^6)$
\item $ 9_{48}\hfill (1 - 7 m + 11 m^2 - 7 m^3 + m^4)$
\item $ 9_{49}\hfill (3 - 6 m + 7 m^2 - 6 m^3 + 3 m^4)$
\item $10_{69}\hfill (1 - 7 m + 21 m^2 - 29 m^3 + 21 m^4 - 7 m^5 + m^6)$
\item $ 10_{101}\hfill (7 - 21 m + 29 m^2 - 21 m^3 + 7 m^4)$
\item $ 10_{115}\hfill (1 - 9 m + 26 m^2 - 37 m^3 + 26 m^4 - 9 m^5 + m^6)$
\item $ 10_{157}\hfill (1 - 6 m + 11 m^2 - 13 m^3 + 11 m^4 - 6 m^5 + m^6)$  
\item $ 10_{160}\hfill (1 - 4 m + 4 m^2 - 3 m^3 + 4 m^4 - 4 m^5 + m^6)$ \qeda
\end{itemize}
\end{proof1}
\end{proposition}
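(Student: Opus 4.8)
The plan is to reformulate ``not properly factorizable'' as an irreducibility statement over $\mathbb{Q}$ and then settle all nine polynomials uniformly, exploiting the fact that each of these (normalized) Alexander polynomials is palindromic.

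First I would reduce the claim. A polynomial $P\in\mathbb{Z}[m]$ of positive degree is \emph{not} properly factorizable exactly when it is primitive (the $\gcd$ of its coefficients is $1$, so no nonunit constant can be split off) and irreducible over $\mathbb{Q}$: the latter condition is enough because, by Gauss's lemma, a primitive integer polynomial factors as a product of two integer polynomials of positive degree precisely when it is reducible over $\mathbb{Q}$. Primitivity is immediate for all nine polynomials (e.g.\ $\gcd(7,13,7)=1$ for $9_{35}$, and the five degree-$6$ polynomials are monic), so the proposition reduces to nine irreducibility statements.

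The degree-$2$ polynomial $7-13m+7m^2$ is settled at once: its discriminant $169-196=-27$ is negative, so there is no rational root, hence it is irreducible. For the remaining eight — three of degree $4$ ($9_{48},9_{49},10_{101}$) and five of degree $6$ ($9_{47},10_{69},10_{115},10_{157},10_{160}$) — I would use the palindromic symmetry. Every palindromic $P$ of degree $2k$ can be written $P(m)=m^{k}g(m+m^{-1})$ for a polynomial $g$ of degree $k$; the substitution $w=m+m^{-1}$ (with $m^2+m^{-2}=w^2-2$, $m^3+m^{-3}=w^3-3w$) produces this auxiliary $g$ explicitly — a quadratic when $\deg P=4$, a cubic when $\deg P=6$. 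I would first check that each $g$ is irreducible over $\mathbb{Q}$: the three quadratics have discriminants $13,24,21$, none a perfect square, and the five cubics have no rational root (a finite check, the candidates being the divisors of the constant term over those of the leading coefficient). The roots of $P$ come in reciprocal pairs $\{\mu,1/\mu\}$ lying over the roots of $g$, and the Galois group permutes the roots of $g$ transitively; hence each Galois orbit on the roots of $P$ surjects onto the roots of $g$ and so has at least $k$ elements, and no factor of $P$ can contain a complete pair $\{\mu,1/\mu\}$ (that would force the corresponding root of $g$ into $\mathbb{Q}$). Consequently, once $g$ is irreducible, $P$ must be either irreducible over $\mathbb{Q}$ or a product of two primitive degree-$k$ factors that are reciprocals of one another. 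The final step is to exclude this reciprocal-pair alternative: imposing $P=\pm Q\,\widetilde{Q}$, where $\widetilde{Q}(m)=m^{\deg Q}Q(1/m)$ and the leading and constant coefficients of $Q$ are constrained to be divisors of the leading coefficient of $P$, and comparing coefficients gives a small system in the at most two remaining coefficients of $Q$. One then checks that this system has no rational solution in each of the eight cases; typically one of the symmetric coefficient equations forces a sum of two squares to be negative (for instance $c_1^2+c_2^2=-31$ for $10_{69}$) or a product of two integers to be a half-integer.

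The step I expect to be the main obstacle is exactly this last elimination of the reciprocal-pair factorization in the degree-$6$ cases: irreducibility of the auxiliary cubic does \emph{not} by itself imply irreducibility of $P$, so one must genuinely carry out the coefficient comparison for $P=\pm Q\,\widetilde{Q}$ and verify inconsistency — for both signs and for each admissible choice of the constant term of $Q$ — which is where a hasty argument could miss a hidden factorization. A handful of cases do collapse more cheaply modulo a small prime ($9_{47}$ becomes $\Phi_{9}$ and $9_{48}$ becomes $\Phi_{5}$ modulo $2$, both irreducible there because $2$ has order $6$ modulo $9$ and order $4$ modulo $5$), but since no single small prime disposes of all nine polynomials, the palindromic argument is the one I would push through uniformly.
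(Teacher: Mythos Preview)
Your proposal is correct and takes a genuinely different route from the paper. The paper's proof is purely computational: for each polynomial it asks Mathematica for the complex roots $r_i$, forms every proper sub-product $a_n\prod_{i\in S}(m-r_i)$ (where $a_n$ is the leading coefficient), and verifies numerically---by evaluating at $m=0$ and $m=1$---that none of these are integer polynomials. Your argument is algebraic: reduce via Gauss's lemma to irreducibility over $\mathbb{Q}$, exploit the palindromic symmetry $P(m)=m^{k}g(m+m^{-1})$ to halve the degree, check the auxiliary $g$ is irreducible (discriminants for the quadratics, rational-root test for the cubics), deduce that any proper factorization must be of the reciprocal form $P=\pm Q\,\widetilde Q$, and then kill that case by coefficient comparison. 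What your approach buys is a self-contained proof with no appeal to floating-point root-finding; what the paper's buys is uniformity---one loop over subsets handles every knot with no case analysis.

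One small expository point: your parenthetical ``(that would force the corresponding root of $g$ into $\mathbb{Q}$)'' is not the right reason in the degree-$6$ case. The clean justification is the one you already gave a sentence earlier: since the map $\mu\mapsto\mu+\mu^{-1}$ from the roots of a degree-$k$ irreducible factor $Q$ onto the $k$ roots of $g$ is surjective, it is bijective, so $\mu$ and $1/\mu$ cannot both be roots of $Q$. That counting argument is what forces $Q_2=\widetilde{Q_1}$; the parenthetical can simply be dropped. Your sample computation for $10_{69}$ ($a+b=-7$, $ab=28$, $a^2+b^2=-31$) is exactly the kind of contradiction the final step produces, and you are right that this last elimination---checking both signs and all admissible constant terms of $Q$---is where the actual work lies.
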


We now show that in the case of  knots with  Alexander polynomials that are not factorizable, if their colouring matrices are
triangularizable then they are equivalent to a type I matrix.

\begin{proposition}Let $K$ be a knot  and $\mbox{Alex}_{K}(m)$ its Alexander polynomial. 
Assume that  $\mbox{Alex}_{K}(m)$ is not properly factorizable
and that there is a triangular colouring matrix $A$  for $K$.  
Then it is possible to transform $A$ into an equivalent type I matrix $B$. 

\begin{proof}
We show firstly that it is possible to transform $A$ in such a way that 
all entries  on the diagonal (except the final one that is $0$) are either $1$ or $\mbox{Alex}_{K}(m)$.
Recall that the entries in each row of the colouring matrix add up to zero. 
Since the operations that transformed the original colouring matrix into the
triangular matrix $A$ preserve this property we know that the entries
in each row of $A$ add up to zero. Since $A$ is assumed to be
triangular its final row must consist only of zeros. This row will be left unchanged.
Let $N$ be the number of rows (and columns) of $A$. Each remaining diagonal entry is a polynomial in $m$ and $m^{-1}$  and the product of the 
entries on the diagonal except for the final row is $\Pi_{i=1}^{N-1}a_{ii}=\pm m^k\mbox{Alex}_{K}(m)$, the Alexander polynomial possibly multiplied by  $\pm m^k$. Since  $\mbox{Alex}_{K}(m)$ is not properly factorizable, none of  the diagonal entries can be a proper factor and they are either $a_{ii}=\pm m^{k_i}$ or $a_{ii}=\pm m^{k_i}\mbox{Alex}_{K}(m)$ (this case can occur only once). Now multiply each row by $\pm m^{-k_i}$.

Next swap the column where $\mbox{Alex}_{K}(m)$ occurs with the penultimate column and then the row where $\mbox{Alex}_{K}(m)$ occurs with the penultimate row. The non-zero entries under the diagonal are turned into zero by repeatedly adding to the corresponding row suitable multiples of the rows above where $1$ occurs in the diagonal. Therefore the final matrix is type I. 
\end{proof}
\end{proposition}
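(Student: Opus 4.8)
The plan is to argue in two stages: first normalize the diagonal so that every diagonal entry (save the final zero) is a unit times either $1$ or $\mbox{Alex}_K(m)$, and then perform column and row swaps plus upward row reductions to bring the matrix into the standard type I shape. The starting point is that the transforming operations of Definition \ref{equivmatrices} preserve the property that the entries of each row sum to zero, so $A$ inherits this; in particular, since $A$ is triangular, its last row must be all zeros (the sole diagonal entry in that row is $0$, hence every other entry in that row is $0$ too), and that row is inert under everything that follows.

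Next I would exploit non-factorizability. The product of the first $N-1$ diagonal entries of $A$ is, up to a sign and a power of $m$, the determinant of the Alexander matrix, i.e.\ $\Pi_{i=1}^{N-1} a_{ii} = \pm m^{k}\,\mbox{Alex}_K(m)$, because deleting the last row and column of $A$ leaves an upper-triangular $(N-1)\times(N-1)$ block whose determinant is that product, and row multiplications by $-1,m,m^{-1}$ only change the determinant by $\pm m^{\pm 1}$ factors. Each $a_{ii}$ is a Laurent polynomial in $m$; writing $a_{ii} = \pm m^{k_i} p_i(m)$ with $p_i$ an honest polynomial with nonzero constant term, the product of the $p_i$ equals $\pm\mbox{Alex}_K(m)$ up to sign, and since $\mbox{Alex}_K(m)$ is not properly factorizable, all but one of the $p_i$ must be $\pm 1$, with the exceptional one equal to $\pm\mbox{Alex}_K(m)$. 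Multiplying each row $i$ by $\pm m^{-k_i}$ (legal, since $m$ is a unit mod $n$) makes each diagonal entry exactly $1$ except for the single special entry, which becomes $\mbox{Alex}_K(m)$ — or its negative, which can be flipped by a further multiplication by $-1$. Here one must be a little careful that multiplying a row by a unit does not disturb triangularity (it does not: it rescales a whole row, so zeros stay zeros) and that the row-sum-zero property is retained (it is, by Definition \ref{equivmatrices}).

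For the final stage, I would swap the column containing $\mbox{Alex}_K(m)$ with the penultimate column, then swap the row containing $\mbox{Alex}_K(m)$ with the penultimate row; the resulting matrix has $1$'s along the diagonal in positions $1,\dots,N-2$, the entry $\alpha(m)=\mbox{Alex}_K(m)$ in position $(N-1,N-1)$, and $0$'s in the last row. It may no longer be upper triangular — the swaps can create nonzero entries below the diagonal in the bottom couple of rows — so I would then clear those: for each offending entry below the diagonal in column $j$, add a suitable $\mathbb{Z}_n$-multiple of row $j$ (which has a $1$ in position $(j,j)$ and zeros below it) to the offending row, working from column $1$ upward so that no previously cleared entry is disturbed. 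This leaves an upper-triangular matrix with the type I diagonal; finally, the row-sum-zero property forces the $(N-1,N)$ entry to be $-\alpha(m)$, matching the type I template exactly. The main obstacle — really the only subtle point — is making precise the non-factorizability argument on the diagonal: one has to handle the Laurent-vs-polynomial bookkeeping (the $\pm m^{k_i}$ factors) cleanly and observe that a nontrivial split of $\mbox{Alex}_K(m)$ among two or more diagonal entries would contradict non-factorizability, so I would state that bookkeeping carefully and leave the remaining row/column manipulations as the routine Gaussian-elimination steps they are.
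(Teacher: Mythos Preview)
Your proposal is correct and follows essentially the same route as the paper: use non-factorizability to see each diagonal entry is $\pm m^{k_i}$ or $\pm m^{k_i}\mbox{Alex}_K(m)$, normalize by row scalings, swap the special entry to position $(N-1,N-1)$, then clear below the diagonal using the rows with unit pivots. The only quibble is your phrase ``a suitable $\mathbb{Z}_n$-multiple of row $j$'': the entries are Laurent polynomials in $m$, and the allowed operations only let you multiply rows by $\pm m^{\pm 1}$ and add rows, so one should say that an arbitrary Laurent-polynomial multiple of row $j$ can be added to another row by iterating these primitive moves (scale row $j$, add, unscale) --- but this is exactly what the paper glosses as ``suitable multiples'' too, so your argument stands.
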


Now we show that some colouring matrices cannot be triangularized.

\begin{proposition}The colouring matrices of knots $9_{35}$, $9_{47}$, $  9_{48}$, $  9_{49}$ and
$ 10_{157}$  cannot be triangularized.

\begin{proof1} Since these knots have non-properly factorizable Alexander polynomials we know that if they have a triangular colouring matrix 
then the number of colourings in any linear Alexander quandle $\mathbb{Z}_{n}[t,t^{-1}]\ /\ (t-m)$ must be $n\times\mbox{gcd}(\mbox{Alex}_K(m),n)$.
The proof consists now in exhibiting for each such knot $K$ a quandle where the true number of colourings is not equal to $n\times\mbox{gcd}(\mbox{Alex}_K(m),n)$.

\item $9_{35}$. We have seen in section \ref{nontriang} that for $m=2$ and $n=3$ the number of colourings of the knot $9_{35}$ is $27$. Its Alexander polynomial is 
$7-13m+7m^{2}$ and is not properly factorizable. The number of colourings computed using an equivalent triangular colouring matrix
would, however, be  $n\times\mbox{gcd}(\mbox{Alex}_K(m),n)=3\times\mbox{gcd}(7-26+28,3)=3\times\mbox{gcd}(9,3)=3\times 3=9$.

\item $9_{47}$.  We look for colourings using the quandle given by $m=2, n=3$.  For $m=2$ the Alexander polynomial $1 - 4 m + 6 m^2 - 5 m^3 + 6 m^4 - 4 m^5 + m^6$ is $9$. Thus the number of colourings computed from a triangular matrix for  $9_{47} $ would be $3\times \mbox{gcd}(9,3)=3\times 3=9$. However there are $3\times 3\times 3$ colourings as we can see from the relevant entries of the colouring matrix for this knot (after simplification).
The second matrix below is obtained from the first by putting $m=2$ and the third by substituting the mod $3$ values of the entries.

 \hspace{\fill}
$\left[
\begin{array}
[c]{cc}%
-1+4 m-m^2 &-2-m-m^2+m^3\\
 2-7 m & 3+4 m+2 m^2-m^4\\
\end{array}
\right]  $ \hspace{\fill}
$\left[
\begin{array}
[c]{cc}%
3 & 0\\
 -12 &3  \\
\end{array}
\right]$\hspace{\fill}
$\left[
\begin{array}
[c]{cc}%
0 & 0\\
 0 &  0\\
\end{array}
\right]$ \hspace{\fill}

\item $ 9_{48}$.  We look for colourings using the quandle given by $m=2, n=3$. For $m=2$ the Alexander polynomial $1 - 7 m + 11 m^2 - 7 m^3 + m^4$ is $-9$. Thus the number of colourings computed from a triangular matrix for  $9_{48} $ would be   $3\times \mbox{gcd}(-9,3)=3\times 3=9$.  However there are $3\times 3\times 3$ colourings, proceeding as in the previous case:

\hspace{\fill}$\left[
\begin{array}{ccc}
 2-m &   & 2-8 m+7 m^2-m^3 \\
 3 &   & 3-10 m+2 m^2+5 m^3-m^4
\end{array}
\right]$ \hspace{\fill}
$\left[
\begin{array}{ccc}
0 &   & 6 \\
 3 &   & 15
\end{array}
\right]$ \hspace{\fill}
$\left[
\begin{array}{ccc}
0 &   & 0 \\
0 &   & 0
\end{array}\right]$ \hspace{\fill}

\item $ 9_{49}$. We look for colourings using the quandle given by  $m=4, n=5$ since for $m=2, n=3$  this case is inconclusive. The Alexander polynomial $3 - 6 m + 7 m^2 - 6 m^3 + 3 m^4$ for $m=4$ is $475$. Thus the number of colourings computed from a triangular matrix for  $9_{49} $ would be   $5\times \mbox{gcd}(475,5)=5\times 5=25$.  However there are $5\times 5\times 5$ colourings.

 \hspace{\fill}
$\left[
\begin{array}
[c]{cc}%
-2 - m + m^2& 3 - m - m^2 - 2 m^3\\
3 - 2 m& -3 + 3 m + m^2\\
\end{array}
\right]  $ \hspace{\fill}
$\left[
\begin{array}
[c]{cc}%
10 & -145\\
 -5 &25  \\
\end{array}
\right]$\hspace{\fill}
$\left[
\begin{array}
[c]{cc}%
0 & 0\\
 0&  0\\
\end{array}
\right]$ \hspace{\fill}

\item $ 10_{157}$.  For $m=2, n=3$ this case is also inconclusive.
But for $m=6, n=7$ the Alexander polynomial  $1 - 6 m + 11 m^2 - 13 m^3 + 11 m^4 - 6 m^5 + m^6$ is $11809$. Then the number of colourings computed from a triangular matrix for  $10_{157} $ would be   $7\times \mbox{gcd}(11809,7)=7\times 7=49$.  However there are $7\times 7\times 7$ colourings.

 \hspace{\fill}
$\left[
\begin{array}
[c]{cc}%
4 - 3 m& -7 + 12 m - 9 m^2 + 6 m^3 - m^4\\
-1 + m^2& 2 - 3 m + m^2 - m^3\\
\end{array}
\right]  $ \hspace{\fill}
$\left[
\begin{array}
[c]{cc}%
-14 & -259\\
 35&-196 \\
\end{array}
\right]$\hspace{\fill}
$\left[
\begin{array}
[c]{cc}%
0& 0\\
 0 &  0\\
\end{array}
\right]$ \hspace{\fill}
\qed
\end{proof1}
\end{proposition}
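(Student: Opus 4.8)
The plan is to argue by contradiction, leaning on the two propositions immediately preceding. Suppose that for one of the listed knots $K$ the colouring matrix were triangularizable. Since $\mbox{Alex}_K(m)$ is not properly factorizable (the preceding proposition), the proposition just above forces any triangular colouring matrix for $K$ to be equivalent to a type I matrix, with $\mbox{Alex}_K(m)$ occupying the $(N-1,N-1)$ entry. By Proposition \ref{propt1}, the number of colourings of $K$ in \emph{every} linear finite Alexander quandle $\mathbb{Z}_n[t,t^{-1}]/(t-m)$ would then be forced to equal $n\times\mbox{gcd}(\mbox{Alex}_K(m),n)$. Since the number of colourings is a genuine knot invariant, and the allowed row operations and column swaps preserve the number of solutions of $AX=0$ mod $n$, it suffices to exhibit, for each $K$ on the list, one coprime pair $(m,n)$ for which the true number of colourings differs from $n\times\mbox{gcd}(\mbox{Alex}_K(m),n)$.

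To compute the true number of colourings I would use the simplified (non-triangularized) colouring matrices output by the algorithm, as displayed in Section \ref{nontriang} and the appendix: each has all diagonal entries equal to $1$ except for a $2\times 2$ ``relevant block'' sitting in the penultimate two rows and columns, followed by a final zero row, and the omitted final column is redundant because the entries in each row sum to zero. For a well-chosen modulus $n$ the four entries of the relevant block all vanish mod $n$; then the last three rows impose no constraint, so $X_{N-2},X_{N-1},X_N$ are free and there are exactly $n^3$ colourings, whereas the type I formula gives only $n\times\mbox{gcd}(\mbox{Alex}_K(m),n)\le n^2<n^3$ — a contradiction. Concretely I would take $(m,n)=(2,3)$ for $9_{35}$, $9_{47}$ and $9_{48}$, the pair $(m,n)=(4,5)$ for $9_{49}$, and $(m,n)=(6,7)$ for $10_{157}$, then substitute $m$ into the relevant block, reduce mod $n$, and check it becomes the zero matrix; for $9_{35}$, for instance, this yields $27$ colourings against $3\times\mbox{gcd}(9,3)=9$.

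The routine part is the arithmetic: evaluate each $2\times 2$ block at the chosen $m$, reduce mod $n$, and read off $n^3$ on one side and $n\times\mbox{gcd}(\mbox{Alex}_K(m),n)$ on the other. The genuinely non-mechanical step — and the reason the analogous claim for $10_{69}$, $10_{101}$, $10_{115}$, $10_{160}$ is only conjectured — is finding a modulus $n$ that simultaneously annihilates the relevant block while keeping $\mbox{gcd}(\mbox{Alex}_K(m),n)$ strictly below $n^2$; there is no obvious systematic recipe, so I would simply search small coprime pairs $(m,n)$ by computer until a witnessing quandle appears. Once such a pair is in hand for each of the five knots, the contradiction is immediate and the proposition follows.
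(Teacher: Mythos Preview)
Your proposal is correct and follows essentially the same approach as the paper: argue by contradiction via the two preceding propositions to reduce to the type~I formula $n\times\mbox{gcd}(\mbox{Alex}_K(m),n)$, then exhibit for each knot a specific $(m,n)$ for which the simplified $2\times 2$ block vanishes mod~$n$, yielding $n^3$ colourings instead. You even select the identical witnessing pairs $(2,3)$, $(2,3)$, $(2,3)$, $(4,5)$, $(6,7)$ that the paper uses, and your added remark that $n\times\mbox{gcd}(\mbox{Alex}_K(m),n)\le n^2<n^3$ makes the contradiction pleasantly uniform.
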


We have already mentioned that  knots $10_{69}$, $10_{101}$, $10_{115}$ and $10_{160}$ also have Alexander polynomials that do not factorize.
It is remarkable that they do not have type I colouring matrices according to our calculations and according to \cite{kawauchi1996survey} but somehow behave like type I. Indeed the number of colourings using thousands of linear quandles coincides with the number obtained using the expression for type I matrices.
Presumably this means that a general formula for the number of colourings reduces to the type I formula in these cases because of some specific feature.

\section{Colourings and the Alexander polynomial}\label{alexandcol}
In this section we show that given two knots with triangularizable colouring matrices but different Alexander polynomials, a  linear finite  Alexander quandle can be exhibited that distinguishes the two knots by colourings. Note that this holds for any two knots which give triangularized matrices (they do not have to be type I or type II, they can be  \textit{any type}).

Firstly we note that there is an upper bound for the number of colourings of a triangular colouring matrix in any quandle.

\begin{proposition}
Let $K$ be a knot and  $A$  an $N\times N$ triangularized colouring matrix for $K$, with diagonal entries $a_{ii} (m)$, $i=1,...,N-1$.
Then the number of colourings of $K$ in  any  linear  finite Alexander quandle $Q=\mathbb{Z}_n[T,T^{-1}]/(T-m)$ satisfies
 $C_Q(K)\leq n\times\Pi_{i=1}^{N-1}\mbox{gcd}(a_{ii}(m),n)$.
 \end{proposition}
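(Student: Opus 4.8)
The plan is to mimic the row-by-row back-substitution argument used in the proofs of Propositions \ref{propt1} and the type II result, but now tracking only an upper bound rather than an exact count. Write $A$ in triangular form with diagonal entries $a_{11}(m),\dots,a_{N-1\,N-1}(m)$ and a final row of zeros, and recall that the number of colourings is the number of solutions $X=(X_1,\dots,X_N)\in\mathbb{Z}_n^N$ of $AX=0\ \mathrm{mod}\ n$. I would solve for the unknowns from the bottom up. The last equation is $0=0\ \mathrm{mod}\ n$, contributing the factor $n$ (all $n$ choices of $X_N$ are allowed). For $i=N-1$ down to $1$, the $i$-th equation reads $a_{ii}(m)X_i + (\text{terms in }X_{i+1},\dots,X_N)=0\ \mathrm{mod}\ n$; once $X_{i+1},\dots,X_N$ are fixed, this is a linear congruence $a_{ii}(m)X_i \equiv c_i\ \mathrm{mod}\ n$ for a constant $c_i$ depending on those later unknowns. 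By the linear congruence theorem (the Proposition quoted above), this congruence has \emph{either} $0$ solutions \emph{or} exactly $d_i:=\gcd(a_{ii}(m),n)$ solutions for $X_i$, so in all cases at most $d_i$ solutions.

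The key step is then to bound the total count by multiplying these per-row bounds. Precisely: the number of solutions equals
$$
\sum_{X_N}\;\sum_{X_{N-1}}\cdots\sum_{X_2}\bigl(\#\{X_1 : a_{11}(m)X_1\equiv c_1\}\bigr),
$$
where each summand is at most $d_1$; summing over the at-most-$d_2$ admissible values of $X_2$ gives at most $d_1 d_2$; continuing upward gives at most $d_1 d_2\cdots d_{N-1}$ for each fixed $X_N$; and summing over the $n$ values of $X_N$ yields the bound $C_Q(K)\le n\prod_{i=1}^{N-1}\gcd(a_{ii}(m),n)$, as claimed. To make this rigorous I would phrase it as a downward induction on $i$: let $S_i$ be the set of partial tuples $(X_i,\dots,X_N)$ that satisfy rows $i,\dots,N$; then $|S_N|=n$ and $|S_i|\le d_i\,|S_{i+1}|$ because each partial tuple in $S_{i+1}$ extends to at most $d_i$ tuples in $S_i$. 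The base case and inductive step are both immediate from the linear congruence theorem.

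The only subtlety — and the one place to be slightly careful — is that $a_{ii}(m)$ is a priori a value in $\mathbb{Z}_n$ coming from a Laurent polynomial in $m$ (the diagonal entries may involve $m^{-1}$), so one should first note that since $\gcd(m,n)=1$ the element $m$ is a unit in $\mathbb{Z}_n$ and each $a_{ii}(m)$ is a well-defined element of $\mathbb{Z}_n$; then $\gcd(a_{ii}(m),n)$ means the gcd of any integer representative with $n$, which is well defined. I do not expect any real obstacle here: the argument is a direct and somewhat weaker version of the exact computations already carried out for types I and II, the point being simply that discarding the ``$0$ solutions'' possibility in favour of ``at most $d_i$'' frees us from having to analyse when the later unknowns make each $c_i$ divisible by $d_i$. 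Equality need not hold (and indeed fails in general), which is exactly why this is stated as an inequality.
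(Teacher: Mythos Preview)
Your proposal is correct and follows essentially the same back-substitution argument as the paper: solve from the last row upward, observe that the final zero row contributes a factor of $n$, and at each higher row invoke the linear congruence theorem to bound the number of extensions of a partial solution by $\gcd(a_{ii}(m),n)$, then multiply. The paper's proof is terser (it calls the argument ``trivial'') and additionally uses the row-sum-zero property to note that row $N-1$ gives \emph{exactly} $\gcd(a_{N-1\,N-1}(m),n)$ solutions rather than merely at most, but this refinement is not needed for the inequality and the two arguments are otherwise the same.
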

 \begin{proof}
 Trivial: We look for solutions upwards, starting with $X_N$. There will be $n$ solutions $X_N=0,\ldots,n-1$ which are possible values for $X_N$. 
 Fix one such value $X_N=v$. We now see for  $X_N=v$  how many solutions there are for the other variables.  
 The penultimate equation is $a_{N-1N-1}X_{N-1}- a_{N-1N-1}v=0 \ \mbox{mod}\ n$, since $a_{N-1N}=-a_{N-1N-1}$.
There are $\mbox{gcd}(a_{N-1N-1},n)$ solutions of this equation. Therefore we
 have $n\times \mbox{gcd}(a_{N-1N-1},n)$ solutions for $X_N$ and $X_{N-1}$. Now for each pair of values for $X_N=v_1$ and $X_{N-1}=v_2$ the equation  in row $N-2$ becomes $a_{N-2N-2}X_{N-2}+a_{N-2N-1}v_2+a_{N-2N}v_1=0\ \mbox{mod}\ n$. If each of these equations admits any solution then the number of solutions will be $\mbox{gcd}(a_{N-2N-2},n)$. Therefore there are at most $n\times \mbox{gcd}(a_{N-1N-1},n)\times \mbox{gcd}(a_{N-2N-2},n)$ solutions for  $X_N$, $X_{N-1}$ and $X_{N-2}$. The considerations for the other rows are similar.
 \end{proof}
 
 We now show that when $n$ is a multiple of all the $a_{ii}(m)$ the inequality becomes an equality.
 
 \begin{proposition}\label{prop:maxcol}
 Let $K$ be a knot and  $A$  a $N\times N$ triangular colouring matrix for $K$, 
 with diagonal entries $a_{ii} (m)$, $i=1,...,N-1$.
Given coprime $1<m<n$ such that  $a_{ii}(m)\neq 0$, for all of  $i=1,...,N-1$,  and furthermore such that $n$ 
is a multiple of all the $a_{ii}(m)$,  $i=1,...,N-1$,
the number of colourings of $K$ in  the  linear  finite Alexander quandle $Q=\mathbb{Z}_n[T,T^{-1}]/(T-m)$ is 
 $C_Q(K)= n\times\Pi_{i=1}^{N-1}\mbox{gcd}(a_{ii}(m),n)=n\times|\mbox{Alex}_K(m)|$.
 \end{proposition}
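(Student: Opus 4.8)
The plan is to upgrade the inequality of the previous proposition to an equality under the divisibility hypothesis, by carrying out the back-substitution explicitly and checking that every congruence encountered is solvable. As a preliminary remark, since $a_{ii}(m)\neq 0$ and $n$ is a multiple of each $a_{ii}(m)$ we have $\gcd(a_{ii}(m),n)=|a_{ii}(m)|$; moreover the product $\prod_{i=1}^{N-1}a_{ii}(m)$ of the diagonal entries is the determinant of the Alexander matrix, hence equals $\mbox{Alex}_K(m)$ up to a power of $\pm m$, and that power is forced to be trivial here because $\gcd(m,n)=1$ while $a_{ii}(m)\mid n$. Thus the bound of the preceding proposition already reads $C_Q(K)\leq n\times\prod_{i=1}^{N-1}|a_{ii}(m)|=n\times|\mbox{Alex}_K(m)|$, and it remains only to exhibit that many colourings.

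To that end I would change variables by setting $Y_i=X_i-X_N$ for $i=1,\dots,N-1$ and keeping $X_N$. Because every row of the colouring matrix sums to zero, the substitution $X_j=Y_j+X_N$ cancels all occurrences of $X_N$, turning row $i$ into $\sum_{j=i}^{N-1}a_{ij}(m)Y_j\equiv0\bmod n$. Writing $B$ for the Alexander matrix, i.e.\ the upper-triangular $(N-1)\times(N-1)$ block of $A$ with diagonal $a_{11}(m),\dots,a_{N-1,N-1}(m)$, this shows that $AX\equiv0$ is equivalent to: $X_N$ arbitrary, together with $BY\equiv0\bmod n$; conversely any solution $Y$ of $BY\equiv0$ extends to a solution of $AX\equiv0$ on appending $Y_N=0$. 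Hence $C_Q(K)=n\times\#\{Y\in\mathbb{Z}_n^{N-1}:BY\equiv0\bmod n\}$, so it suffices to prove this count equals $\prod_{i=1}^{N-1}|a_{ii}(m)|$.

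I would count the solutions of $BY\equiv0$ by back-substitution from the last row upward, as in the type I and type II proofs, tracking two facts. Let $H_j$ be the subgroup of $\mathbb{Z}_n^{N-j}$ consisting of the solutions of the equations of rows $j,\dots,N-1$ in the variables $Y_j,\dots,Y_{N-1}$, and put $P_j=\prod_{l=j}^{N-1}|a_{ll}(m)|$, so $P_1=|\mbox{Alex}_K(m)|$. First, a short downward induction on $j$, using only that $B$ is triangular, shows that $P_j$ annihilates $H_j$: for $Y\in H_j$ one has $a_{jj}(m)Y_j=-\sum_{l>j}a_{jl}(m)Y_l$, so multiplying by $P_{j+1}$ kills the right-hand side by the inductive hypothesis, whence $P_jY_j\equiv0$, and likewise $P_jY_l\equiv0$ for $l>j$. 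Second, the forgetful projection $H_i\to H_{i+1}$ has kernel $\{Y_i:a_{ii}(m)Y_i\equiv0\}$ of size $\gcd(a_{ii}(m),n)=|a_{ii}(m)|$, and it is surjective: for $Y\in H_{i+1}$ the element $-\sum_{j>i}a_{ij}(m)Y_j$ is, by the annihilation property, a multiple of $n/P_{i+1}$, and since $|a_{ii}(m)|\,P_{i+1}=P_i$ divides $P_1=|\mbox{Alex}_K(m)|$ which divides $n$, it is a fortiori a multiple of $|a_{ii}(m)|$, so the congruence $a_{ii}(m)Y_i\equiv-\sum_{j>i}a_{ij}(m)Y_j\bmod n$ is solvable. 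Therefore $|H_i|=|a_{ii}(m)|\cdot|H_{i+1}|$ for every $i$, and inductively $\#\{Y:BY\equiv0\}=|H_1|=\prod_{i=1}^{N-1}|a_{ii}(m)|$, giving $C_Q(K)=n\times|\mbox{Alex}_K(m)|$ as claimed.

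The step I expect to be the crux is the surjectivity of $H_i\to H_{i+1}$, i.e.\ the divisibility bookkeeping above: one should understand the hypothesis in the form that $n$ is divisible by the whole product $\prod_i a_{ii}(m)=\mbox{Alex}_K(m)$ — equivalently, by the largest invariant factor of the Alexander matrix — so that the right-hand side of each back-substitution step is genuinely divisible by the corresponding pivot; the annihilation property $P_jH_j=0$ is precisely the device that converts "$n$ is divisible by the product" into this pivot-by-pivot divisibility. A minor loose end, dispatched by $\gcd(m,n)=1$, is removing the spurious powers of $\pm m$ when identifying the product of the pivots with $\mbox{Alex}_K(m)$.
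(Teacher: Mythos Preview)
Your proof is correct and follows essentially the same route as the paper's: both make the change of variables $Y_i=X_i-X_N$ (exploiting that row sums vanish), then carry out back-substitution while tracking a divisibility invariant that guarantees each congruence is solvable. The only cosmetic difference is the packaging of that invariant: the paper records that every partial solution has $Y_i$ divisible by $\prod_{k<i}a_{kk}(m)$, whereas you record the dual statement that $P_j=\prod_{l\ge j}|a_{ll}(m)|$ annihilates $H_j$; under the hypothesis $n=c\prod_k|a_{kk}(m)|$ these are the same thing, and both yield the needed divisibility of the right-hand side by the pivot $a_{ii}(m)$. Your observation that the hypothesis should be read as ``$n$ is a multiple of the product $\prod_i a_{ii}(m)$'' is exactly how the paper uses it (it writes $n=c\times\prod|a_{ii}(m)|$ in the proof and chooses $n$ that way in the subsequent proposition).
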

 \begin{proof}
 First it is convenient to rewrite the equations $AX=0\ \mbox{mod}\ n$ in terms of the variables $Y_i=X_i-X_N$, for $i=1,...,N-1$. 
 The final row remains unchanged but every other  row $a_{ii}(m)X_i+...+a_{iN-1}(m)X_{N-1}+a_{iN}(m)X_N$ 
 can be rewritten as  $a_{ii}(m)Y_i+...+a_{iN-1}(m)Y_{N-1}(m)$ since the entries  add up to zero and therefore 
 $a_{iN}=-(a_{ii}+...+a_{iN-1})$. 
 
 In the proof we use the fact that $\mbox{gcd}(a_{ii}(m),n)=|a_{ii}(m)|$ since 
$n$ is a multiple of each $a_{ii}(m)\neq 0$. That $n$ is a multiple of each $a_{ii}(m)$ can be written as
  $n=c\times\Pi_{i=1}^{N-1} |a_{ii}(m)|$.
We now show by induction that, given solutions for the rows below the $i^{th}$ row,
 there are $\mbox{gcd}(a_{ii}(m),n)=|a_{ii}(m)|$ solutions for row $i$. It is convenient to write $i=N-j$ and use $j$ for induction.
 We show that there are $\mbox{gcd}(a_{N-jN-j}(m),n)$ solutions for  $Y_{N-j}$ 
 and also that  each solution is a multiple of $\Pi_{k=1}^{N-j-1} a_{kk}(m)$,
 the product of the diagonal entries in the rows above the  $i^{th}$  row. 
 
 For $j=1$ we have $ \mbox{gcd}(a_{N-1N-1}(m),n)$ solutions of 
 $Y_{N-1}$ (recall the proof of proposition \ref{coltypeI}) .  
 The solutions are $k\times \frac{n}{\mbox{gcd}(a_{N-1N-1}(m),n)}=k\times \frac{c\times\Pi_{i=1}^{N-1} |a_{ii}(m)|}{|a_{N-1N-1}(m)|}=
 k\times c\times\Pi_{i=1}^{N-2} |a_{ii}(m)|=
 k_1\times\Pi_{i=1}^{N-2} a_{ii}(m)$.

 Now for $j>1$ the equation is  $a_{N-jN-j}(m)Y_{N-j}+a_{N-jN-j+1}(m)Y_{N-j+1}+...+a_{N-jN-1}(m)Y_{N-1}=0\ \mbox{mod}\ n$. 
Choosing a solution for each $Y_{N-j+1},...,Y_{N-1}$ by the induction hypothesis we obtain 
$a_{N-jN-j}(m)Y_{N-j} $ $+$  $a_{N-jN-j+1}(m)\times k_{j-1}\Pi_{i=1}^{N-j} a_{ii}(m) $ $+ \dots +a_{N-jN-1}(m)\times k_1\Pi_{i=1}^{N-2} a_{ii}(m) = 0\ \mbox{mod}\ n$.
Since all  terms except the first  include the factor $\Pi_{i=1}^{N-j} a_{ii}(m)$ we can rearrange this equation as $a_{N-jN-j}(m)Y_{N-j}+\beta(m)\times\Pi_{i=1}^{N-j} a_{ii}(m)=0\ \mbox{mod}\ n$. 
Now  we have that $\mbox{gcd}(a_{N-jN-j}(m),n)=|a_{N-jN-j}(m)|$ and the independent term $\beta(m)\times\Pi_{i=1}^{N-j} a_{ii}(m)$ is divisible by
$a_{N-jN-j}(m)$  so there are $\mbox{gcd}(a_{N-jN-j}(m),n)=|a_{N-jN-j}(m)|$ solutions, as follows from the linear congruence theorem.
Moreover, $a_{N-jN-j}(m)Y_{N-j}+\beta(m)\times\Pi_{i=1}^{N-j} a_{ii}(m)=0\ \mbox{mod}\ n$ can be equivalently rewritten as 
$$a_{N-jN-j}(m)\left(Y_{N-j} + \beta(m)\times\Pi_{i=1}^{N-j-1} a_{ii}(m)\right) = 0\ \mbox{mod}\ n$$ One solution of this equation is 
$Y_{N-j}=x_0=-\beta(m)\times\Pi_{i=1}^{N-j-1} a_{ii}(m)$. Any other solution is of the form $x_0 + k' \frac{n}{\mbox{gcd}(a_{N-jN-j}(m),n)}$. Now it is easy to check that both $x_0$ and $k' \frac{c\times\Pi_{i=1}^{N-1} a_{ii}(m)}{|a_{N-jN-j}(m)|}$ are multiples of $\Pi_{i=1}^{N-j-1} a_{ii}(m)$ and therefore so is each solution of the equation. This ends the proof by induction. The final result follows easily: there are $n$ solutions for $X_N$ and each solution for $Y_i=X_i-X_N$ yields a solution for $X_i$. Therefore there are $n\times\Pi_{i=1}^{N-1} |a_{ii}(m)|=n\times |\mbox{Alex}_K(m)|$ solutions.
 \end{proof}
 
 The main result of this section follows.

 \begin{proposition}\label{alexdist}\label{aaaa} 
Let $K_1$ and $K_2$ be knots with different Alexander polynomials 
$\mbox{Alex}_{K_1}(m)\neq \mbox{Alex}_{K_2}(m)$. 
Assume furthermore that their colouring matrices are triangularizable.
Then there is a  linear Alexander quandle that distinguishes them.
 \end{proposition}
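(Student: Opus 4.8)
The plan is to reduce the whole statement to Proposition \ref{prop:maxcol}, which already asserts that as soon as $n$ is a common multiple of all the diagonal entries of a triangular colouring matrix (and $1<m<n$, $\gcd(m,n)=1$), the number of colourings equals exactly $n\times|\mbox{Alex}_K(m)|$; this holds for triangular matrices of \emph{any} type, so we never need the type I/II formulae. Hence it suffices to produce a \emph{single} pair $(m,n)$ for which Proposition \ref{prop:maxcol} applies simultaneously to $K_1$ and to $K_2$ and for which $|\mbox{Alex}_{K_1}(m)|\neq|\mbox{Alex}_{K_2}(m)|$: then $Q=\mathbb{Z}_n[T,T^{-1}]/(T-m)$ distinguishes them.

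First I would fix triangular colouring matrices $A_1$ (size $N_1$) and $A_2$ (size $N_2$) for $K_1,K_2$ and put them in a normalized form. Multiplying each row by a suitable power of $m^{\pm1}$ is allowed by Definition \ref{equivmatrices}, preserves both triangularity and the ``entries in each row sum to zero'' property, and does not change the number of colourings whenever $\gcd(m,n)=1$; doing this I may assume every diagonal entry $a_{ii}(m)$ and $b_{jj}(m)$ is an honest polynomial in $m$ with nonzero constant term. Since the product of the diagonal entries represents the Alexander polynomial up to $\pm m^{k}$, and both a product of polynomials with nonzero constant term and the normalized Alexander polynomial have nonzero constant term, the exponent $k$ must vanish; so $\prod_i a_{ii}(m)=\pm\,\mbox{Alex}_{K_1}(m)$ and $\prod_j b_{jj}(m)=\pm\,\mbox{Alex}_{K_2}(m)$ as polynomials. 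In particular $\prod_i|a_{ii}(m)|=|\mbox{Alex}_{K_1}(m)|$ and $\prod_j|b_{jj}(m)|=|\mbox{Alex}_{K_2}(m)|$ for every integer $m$, and each $a_{ii}(0)$ divides $\mbox{Alex}_{K_1}(0)$, each $b_{jj}(0)$ divides $\mbox{Alex}_{K_2}(0)$.

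Next I would choose $m$. The four polynomials $\mbox{Alex}_{K_1}$, $\mbox{Alex}_{K_2}$, $\mbox{Alex}_{K_1}-\mbox{Alex}_{K_2}$, $\mbox{Alex}_{K_1}+\mbox{Alex}_{K_2}$ are all nonzero (the first, second and fourth have positive constant term, and the third because the Alexander polynomials differ), hence have finitely many integer roots; pick a prime $m$ larger than all those roots and larger than $|\mbox{Alex}_{K_1}(0)\,\mbox{Alex}_{K_2}(0)|$. Then $\mbox{Alex}_{K_i}(m)\neq0$ (so every $a_{ii}(m),b_{jj}(m)$ is nonzero), $\mbox{Alex}_{K_1}(m)\neq\pm\mbox{Alex}_{K_2}(m)$ hence $|\mbox{Alex}_{K_1}(m)|\neq|\mbox{Alex}_{K_2}(m)|$, and $m$ divides neither constant term, so $\gcd(m,a_{ii}(m))=\gcd(m,a_{ii}(0))=1$ and $\gcd(m,b_{jj}(m))=1$ for all $i,j$ (using $a_{ii}(m)\equiv a_{ii}(0)\pmod m$). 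Setting $L=\mbox{lcm}$ of all the $|a_{ii}(m)|$ and $|b_{jj}(m)|$, its prime factors all divide one of these entries, so $\gcd(L,m)=1$; I then take $n=L\,(m+1)$, which satisfies $1<m<n$, $\gcd(n,m)=1$ (both $L$ and $m+1$ are coprime to $m$), and is a common multiple of every $a_{ii}(m)$ and $b_{jj}(m)$. Finally Proposition \ref{prop:maxcol} applies to the normalized $A_1$ and $A_2$ with this $(m,n)$, giving $C_Q(K_1)=n\times|\mbox{Alex}_{K_1}(m)|\neq n\times|\mbox{Alex}_{K_2}(m)|=C_Q(K_2)$, so $Q$ distinguishes the two knots. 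The main obstacle is precisely the simultaneous bookkeeping in the choice of $m$: one must avoid the roots of the Alexander polynomials \emph{and} of their sum and difference (to force $|\mbox{Alex}_{K_1}(m)|\neq|\mbox{Alex}_{K_2}(m)|$ together with nonvanishing of the diagonal entries) \emph{and} keep $m$ coprime to the constant terms of all diagonal entries, since otherwise no $n$ could be both coprime to $m$ and a common multiple of those entries — which is exactly the hypothesis Proposition \ref{prop:maxcol} requires. Choosing $m$ to be a sufficiently large prime reconciles these competing demands.
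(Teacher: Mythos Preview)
Your proof is correct and follows essentially the same approach as the paper's: both reduce everything to Proposition~\ref{prop:maxcol}, normalize the diagonal entries to polynomials with nonzero constant term, choose $m$ to avoid the finitely many bad values (roots of the Alexander polynomials and of their sum and difference) while keeping $m$ coprime to the constant terms, and then take $n$ to be a suitable common multiple of the diagonal entries coprime to $m$. The only differences are cosmetic: you use the lcm rather than the product for the multiple, you ensure $n>m$ via the factor $m+1$ rather than by a prime larger than $m$, and you deduce nonvanishing of all diagonal entries from $\mbox{Alex}_{K_i}(m)\neq 0$ rather than avoiding the roots of each $a_{ii}$ and $b_{jj}$ separately.
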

 \begin{proof}
 Since the Alexander polynomials are different there will be an infinite number of values of $m$ such that 
 $|\mbox{Alex}_{K_1}(m)|\neq |\mbox{Alex}_{K_2}(m)|$. 
 Let $A$ be an $N_1\times N_1$ triangular matrix for $K_1$ and $B$ be an $N_2\times N_2$ triangular matrix for $K_2$.
There are also an infinite number of values of $m$ such that additionally 
 $a_{ii}(m)\neq 0$, where  $a_{ii}(m),  i=1,...,N_1-1$ are all but the final  diagonal entries of $A$ and also 
 such that $b_{ii}(m)\neq 0$, where  $b_{ii}(m), i=1,...,N_2-1$  are all but the final diagonal entries of $B$.
 Such values of $m$ are those that are not solutions of any of the equations  $a_{ii}(m)=0, i=1,...,N_1-1$,  $b_{ii}(m)=0, i=1,...,N_2-1$, or $\mbox{Alex}_{K_1}(m)=\mbox{Alex}_{K_2}(m)$ or  $\mbox{Alex}_{K_1}(m)=-\mbox{Alex}_{K_2}(m)$.
 We need a final condition on $m$: that $m$ is coprime with all $a_{ii}(m),  i=1,...,N_1-1$ and all $b_{ii}(m), i=1,...,N_2-1$.
 To find such an $m$ multiply (if needed) the diagonal entries by a power of $m$ so that they become polynomials without 
 negative powers of $m$ and a
non-zero constant term. An $m$ that is coprime with the constant term of the normalized $p(m)$ is coprime
 with $p(m)$.  
  There will also be an infinite number of such values of $m$, for example the prime numbers that are coprime with the constant terms of
 $a_{ii}(m),  i=1,...,N_1-1$ and  $b_{ii}(m), i=1,...,N_2-1$. This $m$ is coprime with 
 $\Pi_{i=1}^{N_1-1}|a_{ii}(m)|\times \Pi_{i=1}^{N_2-1}|b_{ii}(m)|=|\mbox{Alex}_{K_1}(m)|\times |\mbox{Alex}_{K_2}(m)|$. 
 Let $M$ denote  $\Pi_{i=1}^{N_1-1}|a_{ii}(m)|\times \Pi_{i=1}^{N_2-1}|b_{ii}(m)|=|\mbox{Alex}_{K_1}(m)|\times |\mbox{Alex}_{K_2}(m)|$.

 Choose $n$ to be a multiple of $M$ bigger that $m$ and coprime with $m$. If $M>m$ then choose $n=M$. 
 Otherwise multiply $M$ by the first prime bigger than $m$. We now have coprime $1<m<n$ and $n$
satisfying  the conditions of proposition \ref{prop:maxcol} for both knots $K_1$ and $K_2$.
 Therefore the number of colourings of $K_1$ and $K_2$ 
 in  the  linear finite Alexander quandle $Q=\mathbb{Z}_n[T,T^{-1}]/(T-m)$ satisfies

\hfill $C_Q(K_1)= n\times |\mbox{Alex}_{K_1}(m)|\neq n\times |\mbox{Alex}_{K_2}(m)|= C_Q(K_2).$
 \end{proof}
 
 \vspace{\baselineskip}

 We now illustrate the previous result with some examples.
 
 \begin{example} Firstly we consider distinguishing 
  two  type I knots $K_1$ and $K_2$ with different Alexander polynomials. 
  Their triangularized matrices have one final row of zeros,
  the Alexander polynomial in the penultimate diagonal entry 
  and $1$'s in all other diagonal entries.  We 
  have to find an $m$ such that  
  a) $|\mbox{Alex}_{K_1}(m)|\neq |\mbox{Alex}_{K_2}(m)|$;
  b) all diagonal entries except the last are non-zero and c) $m$ is coprime with all diagonal entries.

  In this case this simplifies to  a) $|\mbox{Alex}_{K_1}(m)|\neq |\mbox{Alex}_{K_2}(m)|$;
  b) $\mbox{Alex}_{K_1}(m)\neq 0$ and $\mbox{Alex}_{K_2}(m)\neq 0$ and
  c) $m$ is coprime with  $\mbox{Alex}_{K_1}(m)$ and $\mbox{Alex}_{K_2}(m)$.
 
Take  for example knots $K_1=3_1$ with Alexander polynomial $1-m+m^2$ and 
$K_2=4_1$ with Alexander polynomial $1-3 m+m^2$. Solving $1-m+m^2=1-3 m+m^2$ and
$1-m+m^2=-(1-3 m+m^2)$ we obtain  $m=0$ and $m=1$. The equations
$1-m+m^2=0$ and $1-3 m+m^2=0$ have no integer solutions. So any value of $m>1$ 
satisfies a) and b). Condition c) is fulfilled for $m$ coprime with the constant term of 
each Alexander polynomials that happens to be $1$ in both cases. Therefore any $m>1$ will do.
Choose $m=2$. Now $|\mbox{Alex}_{K_1}(2)|\times |\mbox{Alex}_{K_2}(2)|=3\times 1=3$.
Since $3$ is coprime with $2$ we can choose $n=3$ and the quandle with $m=2$, $n=3$  distinguishes the two  knots. Indeed,
the number of colourings of  $3_1$  is $3\times\mbox{gcd}(3,3)=3\times 3=9$ and the number of colourings of 
 $4_1$ is $3\times\mbox{gcd}(-1,3)=3\times 1=3$.
 
 \end{example}
 \begin{example}
 
 Consider now knots $K_1=10_{137}$ (type I) with Alexander polynomial $1 - 6 m + 11 m^2 - 6 m^3 + m^4$ 
 and $K_2=10_{155}$ (type II) with Alexander polynomial   $1 - 3 m + 5 m^2 - 7 m^3 + 5 m^4 - 3 m^5 + m^6$.
 The relevant entries of the triangularized colouring matrix for $10_{155}$ are the following:
 
\begin{center}
$\left(
\begin{array}{ccc}
 -1+2 m-m^2+m^3 &   & 0 \\
 0 &   & -1+m-2 m^2+m^3
\end{array}
\right)
$
 \end{center}
 
The equation $1 - 6 m + 11 m^2 - 6 m^3 + m^4=1 - 3 m + 5 m^2 - 7 m^3 + 5 m^4 - 3 m^5 + m^6$ has two integer solutions, $m=-1$ and $m=0$, the equation  $1 - 6 m + 11 m^2 - 6 m^3 + m^4=-(1 - 3 m + 5 m^2 - 7 m^3 + 5 m^4 - 3 m^5 + m^6)$ has one integer solution $m=1$,
and the equations $1 - 6 m + 11 m^2 - 6 m^3 + m^4=0$, $ -1+2 m-m^2+m^3=0$ and $ -1+m-2 m^2+m^3=0$ have no integer solutions.
We can choose  $m=2$ because it is coprime with the constant term of $1 - 6 m + 11 m^2 - 6 m^3 + m^4$, 
$ -1+2 m-m^2+m^3$ and $ -1+m-2 m^2+m^3$.  Thus we can choose $n=7$
 since $|\mbox{Alex}_{K_1}(2)|\times |\mbox{Alex}_{K_2}(2)|=1\times 7=7$.

We now confirm that this quandle distinguishes the knots. The number of colourings of knot $10_{137}$  for this 
quandle is $C_Q(K_1)=n\times\mbox{gcd}(\mbox{Alex}_{K_1}(2),n)= 7\times\mbox{gcd}(1,7)=7$.  
To determine the number of colourings of knot $10_{155}$ note that $\alpha_1(m)= -1+2 m-m^2+m^3$,
$\beta_1(m)= 0$ and $\alpha_2(m)= -1+m-2 m^2+m^3$. For  $m=2$,  $\alpha_1(2)=7$ and $\alpha_2(2)=1$.
Therefore $C_Q(K_2)= 7\times \mbox{gcd}(1,7)\times \mbox{gcd}(0,\mbox{gcd}(7,7))=7\times 1\times \mbox{gcd}( 7,7)=7\times 7=49$.
\end{example}
 \begin{example}\label {bbb}
 Consider now knots $K_1=8_{18}$ (type II) with Alexander polynomial $1 - 5 m + 10 m^2 - 13 m^3 + 10 m^4 - 5 m^5 + m^6$ 
 and $K_2=9_{37}$ (type II) with Alexander polynomial   $2 - 11 m + 19 m^2 - 11 m^3 + 2 m^4$. Their triangularized colouring matrices are the following: 
  
  \begin{center}
$8_{18}\text{ : }\left(
\begin{array}{ccc}
 -1+m-m^2 &   & m-m^2+m^3 \\
 0 &   & 1-4 m+5 m^2-4 m^3+m^4
\end{array}
\right)$
\end{center}

\begin{center}
$9_{37}\text{ : }\left(
\begin{array}{ccc}
 1-2 m &   & m+m^2 \\
 0 &   & -2+7 m-5 m^2+m^3
\end{array}
\right)$
\end{center}

The equation  $|\mbox{Alex}_{K_1}(m)|=|\mbox{Alex}_{K_2}(m)|$ has $m=-1$ and $m=1$ as its only integer roots. The equations
$ -1+m-m^2=0$, $1-4 m+5 m^2-4 m^3+m^4=0$ and $ 1-2 m =0$ have no integer roots. The equation $-2+7 m-5 m^2+m^3=0$ has $m=2$ as only integer root. Therefore we may choose $m=3$ which moreover is coprime with both constant terms in the diagonal entries. The product
$|\mbox{Alex}_{K_1}(m)|\times|\mbox{Alex}_{K_2}(m)|$ yields $49\times 5=245$ so we can choose $n=245$.

To determine the number of colourings of knot $8_{18}$ note that $\alpha_1(m)=  -1+m-m^2$,
$\beta_1(m)= m-m^2+m^3 $ and $\alpha_2(m)= 1-4 m+5 m^2-4 m^3+m^4$. 
For  $m=3$,  $\alpha_1(3)=-7$, $\beta_1(3)=21$ and $\alpha_2(3)=7$.
$C_Q(K_1)= 245\times \mbox{gcd}(7,245)\times \mbox{gcd}(21\frac{245}{\mbox{gcd}(7,245)},\mbox{gcd}(7,245))=
245\times 7\times \mbox{gcd}(21\times35,7)=245\times 7\times 7=12005$.

Now for knot $9_{37}$, $\alpha_1(m)=  1-2 m$,
$\beta_1(m)=m+m^2 $ and $\alpha_2(m)= -2+7 m-5 m^2+m^3$. 
For  $m=3$,  $\alpha_1(3)=-5$, $\beta_1(3)=12$ and $\alpha_2(3)=1$.
$C_Q(K_2)= 245\times \mbox{gcd}(1,245)\times \mbox{gcd}(12\frac{245}{\mbox{gcd}(1,245)},\mbox{gcd}(-5,245))=
245\times 1\times \mbox{gcd}(12\frac{245}{1},5)=245\times 1\times 5=1225.$
\end{example}
 
 \section{Acknowledgements} We thank Pedro Lopes from the  Department of Mathematics of IST  for ideas that led to this research;  
 Eduardo Marques de Sá from the Department of Mathematics of the University of Coimbra 
 for help with results on the diagonalization of matrices and J.\ Scott Carter from the Department of Mathematics and Statistics
of the
University of South Alabama for useful discussions in Lisbon.

 \section{Conclusions and Further Work}\label{conc}
We have presented general expressions for the number of colourings of prime knots using linear Alexander quandles when the colouring matrices can be triangularized into one of two forms.

These expressions allow us to conclude that knots with different Alexander polynomials
(and colouring matrices that have an equivalent triangularized form) 
are distinguishable by colourings. We have obtained such a triangular form for all but $12$ knots with up to ten crossings. In $5$  exceptional cases we prove that no triangular form exists. We were also able to make statements about knots with the same Alexander 
polynomial concerning when the number of colourings distinguishes or does not distinguish the knots. 

We conjecture that the condition above on the triangularizability of the colouring matrices can be dropped 
and that knots with different Alexander polynomials can always be distinguished by colourings. 
 Moreover, we expect that similar methods can be applied to knots having more that ten crossings. 
Indeed, the simplification algorithms apply to any knots. For knots with more than ten crossings that have 
equivalent type I or type II matrices we already have a general formula. A natural direction for future work is
to try and find general expressions for the number of solutions when the simplified colouring matrix is of a more general triangular or non-triangular type.
This may also help to elucidate why we were unable to prove that $4$ more knots with non-properly  factorizable Alexander polynomials have non-triangularizable colouring matrices.

\appendix 
\section{Non-triangularized matrices}
We were unable to triangularize the  colouring matrices for the following   12 knots where we display the relevant entries in the penultimate two rows and columns as in section \ref{nontriang}.
Note that the colouring matrices of 
$9_{41}$ and $10_{108}$ can be triangularized if more general column operations are allowed yielding type II and type I matrices respectively.
  \vspace{2\baselineskip}

\noindent\(9_{35}\text{ : }\left(
\begin{array}{ccc}
 2-m &   & -1-m \\
 -3 &   & -2+7 m
\end{array}
\right)\)

\noindent\(9_{38}\text{ : }\left(
\begin{array}{ccc}
 -1+m+m^2 &   & 4-4 m \\
 -5+7 m &   & 15-19 m+5 m^2
\end{array}
\right)\)

\noindent\(9_{41}\text{ : }\left(
\begin{array}{ccc}
 -1+m^2 &   & 4 m-3 m^2 \\
 -4+3 m &   & 13 m-12 m^2+3 m^3
\end{array}
\right)\)

\noindent\(9_{47}\text{ : }\left(
\begin{array}{ccc}
 -1+4 m-m^2 &   & -2-m-m^2+m^3 \\
 2-7 m &   & 3+4 m+2 m^2-m^4
\end{array}
\right)\)

\noindent\(9_{48}\text{ : }\left(
\begin{array}{ccc}
 2-m &   & 2-8 m+7 m^2-m^3 \\
 3 &   & 3-10 m+2 m^2+5 m^3-m^4
\end{array}
\right)\)

\noindent\(9_{49}\text{ : }\left(
\begin{array}{ccc}
 -2-m+m^2 &   & 3-m-m^2-2 m^3 \\
 3-2 m &   & -3+3 m+m^2
\end{array}
\right)\)

\noindent\(10_{69}\text{ : }\left(
\begin{array}{ccc}
 m-m^2-m^3 &   & 1-6 m+10 m^2-2 m^3 \\
 1-2 m+2 m^2 &   & -1+2 m-4 m^2+m^3
\end{array}
\right)\)

\noindent\(10_{101}\text{ : }\left(
\begin{array}{ccc}
 3-5 m+3 m^2 &   & -3+11 m-15 m^2+7 m^3 \\
 1-m+m^3 &   & -1+3 m-5 m^2+2 m^3
\end{array}
\right)\)

{\small
\noindent\(10_{108}\text{ : }\left(\begin{array}{ccc}
 -3 m-m^2 & \hspace{-1cm}   -3+8 m-10 m^2+12 m^3-10 m^4+6 m^5-2 m^6 \\
 \hspace{-0.5cm}  -11 m &   \hspace{-0.5cm}  -11+33 m-47 m^2+57 m^3-51 m^4+34 m^5-14 m^6+2 m^7
\end{array}\right)\)
}

\noindent\(10_{115}\text{ : }\left(
\begin{array}{ccc}
 1-m+m^2 &   & -3+3 m-m^2 \\
 2 m &   & 1-14 m+17 m^2-8 m^3+m^4
\end{array}
\right)\)

\noindent\(10_{157}\text{ : }\left(
\begin{array}{ccc}
 4-3 m &   & -7+12 m-9 m^2+6 m^3-m^4 \\
 -1+m^2 &   & 2-3 m+m^2-m^3
\end{array}
\right)\)

\noindent\(10_{160}\text{ : }\left(
\begin{array}{ccc}
 -3 m &   & 1+m+3 m^2-3 m^3-2 m^4+m^5 \\
 -2 m+m^2 &   & 1-m+3 m^2-4 m^3+m^4
\end{array}
\right)\)

\section{Type II colouring matrices}
In this section we list the relevant entries of the 21 type II  matrices obtained from colouring matrices using  row operations and swapping of columns.
Note that the colouring matrices of knots $10_{106}$ and $10_{147}$
can be simplified (become type I) if more general column operations are allowed.
The relevant entries of type II matrices are :
\begin{center}
\mbox{$\left[
\begin{array}
[c]{cc}%
 \alpha_{1}(m) & \beta_{1}(m) \\
 0 & \alpha_{2}(m) \\
\end{array}
\right]  $}
\end{center}
Given  a linear Alexander quandle
  $Q=\mathbb{Z}_{n}[t,t^{-1}]\ /\ (t-m)$ 
the number of colourings  is
   $$C_Q(K)= n\times \mbox{gcd}(\alpha_2(m),n)\times \mbox{gcd}(\beta_1(m)\frac{n}{\mbox{gcd}(\alpha_2(m),n)},\mbox{gcd}(\alpha_1(m),n)).$$
  
  \vspace{2\baselineskip}

\noindent\(8_{18}\text{ : }\left(
\begin{array}{ccc}
 -1+m-m^2 &   & m-m^2+m^3 \\
 0 &   & 1-4 m+5 m^2-4 m^3+m^4
\end{array}
\right)\)

\noindent\(9_{37}\text{ : }\left(
\begin{array}{ccc}
 1-2 m &   & m+m^2 \\
 0 &   & -2+7 m-5 m^2+m^3
\end{array}
\right)\)

\noindent\(9_{40}\text{ : }\left(
\begin{array}{ccc}
 1-4 m+5 m^2-4 m^3+m^4 &   & 0 \\
 0 &   & -1+3 m-m^2
\end{array}
\right)\)

\noindent\(9_{46}\text{ : }\left(
\begin{array}{ccc}
 2-m &   & -3 \\
 0 &   & 1-2 m
\end{array}
\right)\)

\noindent\(10_{61}\text{ : }\left(
\begin{array}{ccc}
 -1+m-m^2 &   & 1+m-m^2 \\
 0 &   & 2-3 m+m^2-3 m^3+2 m^4
\end{array}
\right)\)

\noindent\(10_{63}\text{ : }\left(
\begin{array}{ccc}
 -1+m-m^2 &   & 2 m^2 \\
 0 &   & -5+9 m-5 m^2
\end{array}
\right)\)

\noindent\(10_{65}\text{ : }\left(
\begin{array}{ccc}
 -1+m-m^2 &   & -1+m+m^2 \\
 0 &   & 2-5 m+7 m^2-5 m^3+2 m^4
\end{array}
\right)\)

\noindent\(10_{74}\text{ : }\left(
\begin{array}{ccc}
 -1+2 m &   & 0 \\
 0 &   & -4+8 m-7 m^2+2 m^3
\end{array}
\right)\)

\noindent\(10_{75}\text{ : }\left(
\begin{array}{ccc}
 1-4 m+3 m^2-m^3 &   & -1+2 m \\
 0 &   & 1-3 m+4 m^2-m^3
\end{array}
\right)\)

\noindent\(10_{98}\text{ : }\left(
\begin{array}{ccc}
 -2+3 m-3 m^2+m^3 &   & 1-m+m^2 \\
 0 &   & -1+3 m-3 m^2+2 m^3
\end{array}
\right)\)

\noindent\(10_{99}\text{ : }\left(
\begin{array}{ccc}
 1-2 m+3 m^2-2 m^3+m^4 &   & 0 \\
 0 &   & 1-2 m+3 m^2-2 m^3+m^4
\end{array}
\right)\)

\noindent\(10_{103}\text{ : }\left(
\begin{array}{ccc}
 -1+2 m-2 m^2 &   & -1+2 m-m^2+m^3 \\
 0 &   & 2-4 m+5 m^2-3 m^3+m^4
\end{array}
\right)\)

\noindent\(10_{106}\text{ : }\left(
\begin{array}{ccc}
 1-m+2 m^2-m^3 &   & -m+2 m^2-2 m^3+m^4 \\
 0 &   & -1+3 m-4 m^2+4 m^3-2 m^4+m^5
\end{array}
\right)\)

\noindent\(10_{122}\text{ : }\left(
\begin{array}{ccc}
 1-4 m+5 m^2-4 m^3+m^4 &   & -1+3 m-m^2 \\
 0 &   & -2+3 m-2 m^2
\end{array}
\right)\)

\noindent\(10_{123}\text{ : }\left(
\begin{array}{ccc}
 1-3 m+3 m^2-3 m^3+m^4 &   & 0 \\
 0 &   & 1-3 m+3 m^2-3 m^3+m^4
\end{array}
\right)\)

\noindent\(10_{140}\text{ : }\left(
\begin{array}{ccc}
 1-m+m^2 &   & -2 m^2 \\
 0 &   & 1-m+m^2
\end{array}
\right)\)

\noindent\(10_{142}\text{ : }\left(
\begin{array}{ccc}
 -1+m-m^2 &   & 1+m-m^2 \\
 0 &   & -2+m+m^2+m^3-2 m^4
\end{array}
\right)\)

\noindent\(10_{144}\text{ : }\left(
\begin{array}{ccc}
 -1+m-m^2 &   & 2 m \\
 0 &   & -3+7 m-3 m^2
\end{array}
\right)\)

\noindent\(10_{147}\text{ : }\left(
\begin{array}{ccc}
 1-2 m &   & -1+4 m-3 m^2 \\
 0 &   & 2-3 m+3 m^2-m^3
\end{array}
\right)\)

\noindent\(10_{155}\text{ : }\left(
\begin{array}{ccc}
 -1+2 m-m^2+m^3 &   & 0 \\
 0 &   & -1+m-2 m^2+m^3
\end{array}
\right)\)

\noindent\(10_{164}\text{ : }\left(
\begin{array}{ccc}
 -1+m-m^2 &   & 3-6 m+4 m^2-m^3 \\
 0 &   & 1-4 m+7 m^2-4 m^3+m^4
\end{array}
\right)\)

\section{Type I colouring matrices}
In this section we list the 216 knots with  colouring matrices that are equivalent to type I matrices and their Alexander polynomials. 

Note that in this case
the number of colourings in a linear Alexander quandle
  $Q=\mathbb{Z}_{n}[t,t^{-1}]\ /\ (t-m)$ is given by $$C_Q(K)=n\times\mbox{gcd}(\mbox{Alex}(m),n)$$ where $\mbox{Alex}(m)$
  is the Alexander polynomial of the knot.
    \vspace{2\baselineskip}
  
\noindent\(3_1\text{ : \  }1-m+m^2\)

\noindent\(4_1\text{: \  }1-3 m+m^2\)

\noindent\(5_1\text{: \  }1-m+m^2-m^3+m^4\)

\noindent\(5_2\text{: \  }2-3 m+2 m^2\)

\noindent\(6_1\text{: \  }2-5 m+2 m^2\)\

\noindent\(6_2\text{: \  }1-3 m+3 m^2-3 m^3+m^4\)

\noindent\(6_3\text{: \  }1-3 m+5 m^2-3 m^3+m^4\)

\noindent\(7_1\text{: \  }1-m+m^2-m^3+m^4-m^5+m^6\)

\noindent\(7_2\text{: \  }3-5 m+3 m^2\)

\noindent\(7_3\text{: \  }2-3 m+3 m^2-3 m^3+2 m^4\)

\noindent\(7_4\text{: \  }4-7 m+4 m^2\)

\noindent\(7_5\text{: \  }2-4 m+5 m^2-4 m^3+2 m^4\)

\noindent\(7_6\text{: \  }1-5 m+7 m^2-5 m^3+m^4\)

\noindent\(7_7\text{: \  }1-5 m+9 m^2-5 m^3+m^4\)

\noindent\(8_1\text{: \  }3-7 m+3 m^2\)

\noindent\(8_2\text{: \  }1-3 m+3 m^2-3 m^3+3 m^4-3 m^5+m^6\)

\noindent\(8_3\text{: \  }4-9 m+4 m^2\)

\noindent\(8_4\text{: \  }2-5 m+5 m^2-5 m^3+2 m^4\)

\noindent\(8_5\text{: \  }1-3 m+4 m^2-5 m^3+4 m^4-3 m^5+m^6\)

\noindent\(8_6\text{: \  }2-6 m+7 m^2-6 m^3+2 m^4\)

\noindent\(8_7\text{: \  }1-3 m+5 m^2-5 m^3+5 m^4-3 m^5+m^6\)

\noindent\(8_8\text{: \  }2-6 m+9 m^2-6 m^3+2 m^4\)

\noindent\(8_9\text{: \  }1-3 m+5 m^2-7 m^3+5 m^4-3 m^5+m^6\)

\noindent\(8_{10}\text{: \  }1-3 m+6 m^2-7 m^3+6 m^4-3 m^5+m^6\)

\noindent\(8_{11}\text{: \  }2-7 m+9 m^2-7 m^3+2 m^4\)

\noindent\(8_{12}\text{: \  }1-7 m+13 m^2-7 m^3+m^4\)

\noindent\(8_{13}\text{: \  }2-7 m+11 m^2-7 m^3+2 m^4\)

\noindent\(8_{14}\text{: \  }2-8 m+11 m^2-8 m^3+2 m^4\)

\noindent\(8_{15}\text{: \  }3-8 m+11 m^2-8 m^3+3 m^4\)

\noindent\(8_{16}\text{: \  }1-4 m+8 m^2-9 m^3+8 m^4-4 m^5+m^6\)

\noindent\(8_{17}\text{: \  }1-4 m+8 m^2-11 m^3+8 m^4-4 m^5+m^6\)

\noindent\(8_{19}\text{: \  }1-m+m^3-m^5+m^6\)

\noindent\(8_{20}\text{: \  }1-2 m+3 m^2-2 m^3+m^4\)

\noindent\(8_{21}\text{: \  }1-4 m+5 m^2-4 m^3+m^4\)

\noindent\(9_1\text{: \  }1-m+m^2-m^3+m^4-m^5+m^6-m^7+m^8\)

\noindent\(9_2\text{: \  }4-7 m+4 m^2\)

\noindent\(9_3\text{: \  }2-3 m+3 m^2-3 m^3+3 m^4-3 m^5+2 m^6\)

\noindent\(9_4\text{: \  }3-5 m+5 m^2-5 m^3+3 m^4\)

\noindent\(9_5\text{: \  }6-11 m+6 m^2\)

\noindent\(9_6\text{: \  }2-4 m+5 m^2-5 m^3+5 m^4-4 m^5+2 m^6\)

\noindent\(9_7\text{: \  }3-7 m+9 m^2-7 m^3+3 m^4\)

\noindent\(9_8\text{: \  }2-8 m+11 m^2-8 m^3+2 m^4\)

\noindent\(9_9\text{: \  }2-4 m+6 m^2-7 m^3+6 m^4-4 m^5+2 m^6\)

\noindent\(9_{10}\text{: \  }4-8 m+9 m^2-8 m^3+4 m^4\)

\noindent\(9_{11}\text{: \  }1-5 m+7 m^2-7 m^3+7 m^4-5 m^5+m^6\)

\noindent\(9_{12}\text{: \  }2-9 m+13 m^2-9 m^3+2 m^4\)

\noindent\(9_{13}\text{: \  }4-9 m+11 m^2-9 m^3+4 m^4\)

\noindent\(9_{14}\text{: \  }2-9 m+15 m^2-9 m^3+2 m^4\)

\noindent\(9_{15}\text{: \  }2-10 m+15 m^2-10 m^3+2 m^4\)

\noindent\(9_{16}\text{: \  }2-5 m+8 m^2-9 m^3+8 m^4-5 m^5+2 m^6\)

\noindent\(9_{17}\text{: \  }1-5 m+9 m^2-9 m^3+9 m^4-5 m^5+m^6\)

\noindent\(9_{18}\text{: \  }4-10 m+13 m^2-10 m^3+4 m^4\)

\noindent\(9_{19}\text{: \  }2-10 m+17 m^2-10 m^3+2 m^4\)

\noindent\(9_{20}\text{: \  }1-5 m+9 m^2-11 m^3+9 m^4-5 m^5+m^6\)

\noindent\(9_{21}\text{: \  }2-11 m+17 m^2-11 m^3+2 m^4\)

\noindent\(9_{22}\text{: \  }1-5 m+10 m^2-11 m^3+10 m^4-5 m^5+m^6\)

\noindent\(9_{23}\text{: \  }4-11 m+15 m^2-11 m^3+4 m^4\)

\noindent\(9_{24}\text{: \  }1-5 m+10 m^2-13 m^3+10 m^4-5 m^5+m^6\)

\noindent\(9_{25}\text{: \  }3-12 m+17 m^2-12 m^3+3 m^4\)

\noindent\(9_{26}\text{: \  }1-5 m+11 m^2-13 m^3+11 m^4-5 m^5+m^6\)

\noindent\(9_{27}\text{: \  }1-5 m+11 m^2-15 m^3+11 m^4-5 m^5+m^6\)

\noindent\(9_{28}\text{: \  }1-5 m+12 m^2-15 m^3+12 m^4-5 m^5+m^6\)

\noindent\(9_{29}\text{: \  }1-5 m+12 m^2-15 m^3+12 m^4-5 m^5+m^6\)

\noindent\(9_{30}\text{: \  }1-5 m+12 m^2-17 m^3+12 m^4-5 m^5+m^6\)

\noindent\(9_{31}\text{: \  }1-5 m+13 m^2-17 m^3+13 m^4-5 m^5+m^6\)

\noindent\(9_{32}\text{: \  }1-6 m+14 m^2-17 m^3+14 m^4-6 m^5+m^6\)

\noindent\(9_{33}\text{: \  }1-6 m+14 m^2-19 m^3+14 m^4-6 m^5+m^6\)

\noindent\(9_{34}\text{: \  }1-6 m+16 m^2-23 m^3+16 m^4-6 m^5+m^6\)

\noindent\(9_{36}\text{: \  }1-5 m+8 m^2-9 m^3+8 m^4-5 m^5+m^6\)

\noindent\(9_{39}\text{: \  }3-14 m+21 m^2-14 m^3+3 m^4\)

\noindent\(9_{42}\text{: \  }1-2 m+m^2-2 m^3+m^4\)

\noindent\(9_{43}\text{: \  }1-3 m+2 m^2-m^3+2 m^4-3 m^5+m^6\)

\noindent\(9_{44}\text{: \  }1-4 m+7 m^2-4 m^3+m^4\)

\noindent\(9_{45}\text{: \  }1-6 m+9 m^2-6 m^3+m^4\)

\noindent\(10_1\text{: \  }4-9 m+4 m^2\)

\noindent\(10_2\text{: \  }1-3 m+3 m^2-3 m^3+3 m^4-3 m^5+3 m^6-3 m^7+m^8\)

\noindent\(10_3\text{: \  }6-13 m+6 m^2\)

\noindent\(10_4\text{: \  }3-7 m+7 m^2-7 m^3+3 m^4\)

\noindent\(10_5\text{: \  }1-3 m+5 m^2-5 m^3+5 m^4-5 m^5+5 m^6-3 m^7+m^8\)

\noindent\(10_6\text{: \  }2-6 m+7 m^2-7 m^3+7 m^4-6 m^5+2 m^6\)

\noindent\(10_7\text{: \  }3-11 m+15 m^2-11 m^3+3 m^4\)

\noindent\(10_8\text{: \  }2-5 m+5 m^2-5 m^3+5 m^4-5 m^5+2 m^6\)

\noindent\(10_9\text{: \  }1-3 m+5 m^2-7 m^3+7 m^4-7 m^5+5 m^6-3 m^7+m^8\)

\noindent\(10_{10}\text{: \  }3-11 m+17 m^2-11 m^3+3 m^4\)

\noindent\(10_{11}\text{: \  }4-11 m+13 m^2-11 m^3+4 m^4\)

\noindent\(10_{12}\text{: \  }2-6 m+10 m^2-11 m^3+10 m^4-6 m^5+2 m^6\)

\noindent\(10_{13}\text{: \  }2-13 m+23 m^2-13 m^3+2 m^4\)

\noindent\(10_{14}\text{: \  }2-8 m+12 m^2-13 m^3+12 m^4-8 m^5+2 m^6\)

\noindent\(10_{15}\text{: \  }2-6 m+9 m^2-9 m^3+9 m^4-6 m^5+2 m^6\)

\noindent\(10_{16}\text{: \  }4-12 m+15 m^2-12 m^3+4 m^4\)

\noindent\(10_{17}\text{: \  }1-3 m+5 m^2-7 m^3+9 m^4-7 m^5+5 m^6-3 m^7+m^8\)

\noindent\(10_{18}\text{: \  }4-14 m+19 m^2-14 m^3+4 m^4\)

\noindent\(10_{19}\text{: \  }2-7 m+11 m^2-11 m^3+11 m^4-7 m^5+2 m^6\)

\noindent\(10_{20}\text{: \  }3-9 m+11 m^2-9 m^3+3 m^4\)

\noindent\(10_{21}\text{: \  }2-7 m+9 m^2-9 m^3+9 m^4-7 m^5+2 m^6\)

\noindent\(10_{22}\text{: \  }2-6 m+10 m^2-13 m^3+10 m^4-6 m^5+2 m^6\)

\noindent\(10_{23}\text{: \  }2-7 m+13 m^2-15 m^3+13 m^4-7 m^5+2 m^6\)

\noindent\(10_{24}\text{: \  }4-14 m+19 m^2-14 m^3+4 m^4\)

\noindent\(10_{25}\text{: \  }2-8 m+14 m^2-17 m^3+14 m^4-8 m^5+2 m^6\)

\noindent\(10_{26}\text{: \  }2-7 m+13 m^2-17 m^3+13 m^4-7 m^5+2 m^6\)

\noindent\(10_{27}\text{: \  }2-8 m+16 m^2-19 m^3+16 m^4-8 m^5+2 m^6\)

\noindent\(10_{28}\text{: \  }4-13 m+19 m^2-13 m^3+4 m^4\)

\noindent\(10_{29}\text{: \  }1-7 m+15 m^2-17 m^3+15 m^4-7 m^5+m^6\)

\noindent\(10_{30}\text{: \  }4-17 m+25 m^2-17 m^3+4 m^4\)

\noindent\(10_{31}\text{: \  }4-14 m+21 m^2-14 m^3+4 m^4\)

\noindent\(10_{32}\text{: \  }2-8 m+15 m^2-19 m^3+15 m^4-8 m^5+2 m^6\)

\noindent\(10_{33}\text{: \  }4-16 m+25 m^2-16 m^3+4 m^4\)

\noindent\(10_{34}\text{: \  }3-9 m+13 m^2-9 m^3+3 m^4\)

\noindent\(10_{35}\text{: \  }2-12 m+21 m^2-12 m^3+2 m^4\)

\noindent\(10_{36}\text{: \  }3-13 m+19 m^2-13 m^3+3 m^4\)

\noindent\(10_{37}\text{: \  }4-13 m+19 m^2-13 m^3+4 m^4\)

\noindent\(10_{38}\text{: \  }4-15 m+21 m^2-15 m^3+4 m^4\)

\noindent\(10_{39}\text{: \  }2-8 m+13 m^2-15 m^3+13 m^4-8 m^5+2 m^6\)

\noindent\(10_{40}\text{: \  }2-8 m+17 m^2-21 m^3+17 m^4-8 m^5+2 m^6\)

\noindent\(10_{41}\text{: \  }1-7 m+17 m^2-21 m^3+17 m^4-7 m^5+m^6\)

\noindent\(10_{42}\text{: \  }1-7 m+19 m^2-27 m^3+19 m^4-7 m^5+m^6\)

\noindent\(10_{43}\text{: \  }1-7 m+17 m^2-23 m^3+17 m^4-7 m^5+m^6\)

\noindent\(10_{44}\text{: \  }1-7 m+19 m^2-25 m^3+19 m^4-7 m^5+m^6\)

\noindent\(10_{45}\text{: \  }1-7 m+21 m^2-31 m^3+21 m^4-7 m^5+m^6\)

\noindent\(10_{46}\text{: \  }1-3 m+4 m^2-5 m^3+5 m^4-5 m^5+4 m^6-3 m^7+m^8\)

\noindent\(10_{47}\text{: \  }1-3 m+6 m^2-7 m^3+7 m^4-7 m^5+6 m^6-3 m^7+m^8\)

\noindent\(10_{48}\text{: \  }1-3 m+6 m^2-9 m^3+11 m^4-9 m^5+6 m^6-3 m^7+m^8\)

\noindent\(10_{49}\text{: \  }3-8 m+12 m^2-13 m^3+12 m^4-8 m^5+3 m^6\)

\noindent\(10_{50}\text{: \  }2-7 m+11 m^2-13 m^3+11 m^4-7 m^5+2 m^6\)

\noindent\(10_{51}\text{: \  }2-7 m+15 m^2-19 m^3+15 m^4-7 m^5+2 m^6\)

\noindent\(10_{52}\text{: \  }2-7 m+13 m^2-15 m^3+13 m^4-7 m^5+2 m^6\)

\noindent\(10_{53}\text{: \  }6-18 m+25 m^2-18 m^3+6 m^4\)

\noindent\(10_{54}\text{: \  }2-6 m+10 m^2-11 m^3+10 m^4-6 m^5+2 m^6\)

\noindent\(10_{55}\text{: \  }5-15 m+21 m^2-15 m^3+5 m^4\)

\noindent\(10_{56}\text{: \  }2-8 m+14 m^2-17 m^3+14 m^4-8 m^5+2 m^6\)

\noindent\(10_{57}\text{: \  }2-8 m+18 m^2-23 m^3+18 m^4-8 m^5+2 m^6\)

\noindent\(10_{58}\text{: \  }3-16 m+27 m^2-16 m^3+3 m^4\)

\noindent\(10_{59}\text{: \  }1-7 m+18 m^2-23 m^3+18 m^4-7 m^5+m^6\)

\noindent\(10_{60}\text{: \  }1-7 m+20 m^2-29 m^3+20 m^4-7 m^5+m^6\)

\noindent\(10_{62}\text{: \  }1-3 m+6 m^2-8 m^3+9 m^4-8 m^5+6 m^6-3 m^7+m^8\)

\noindent\(10_{64}\text{: \  }1-3 m+6 m^2-10 m^3+11 m^4-10 m^5+6 m^6-3 m^7+m^8\)

\noindent\(10_{66}\text{: \  }3-9 m+16 m^2-19 m^3+16 m^4-9 m^5+3 m^6\)

\noindent\(10_{67}\text{: \  }4-16 m+23 m^2-16 m^3+4 m^4\)

\noindent\(10_{68}\text{: \  }4-14 m+21 m^2-14 m^3+4 m^4\)

\noindent\(10_{70}\text{: \  }1-7 m+16 m^2-19 m^3+16 m^4-7 m^5+m^6\)

\noindent\(10_{71}\text{: \  }1-7 m+18 m^2-25 m^3+18 m^4-7 m^5+m^6\)

\noindent\(10_{72}\text{: \  }2-9 m+16 m^2-19 m^3+16 m^4-9 m^5+2 m^6\)

\noindent\(10_{73}\text{: \  }1-7 m+20 m^2-27 m^3+20 m^4-7 m^5+m^6\)

\noindent\(10_{76}\text{: \  }2-7 m+12 m^2-15 m^3+12 m^4-7 m^5+2 m^6\)

\noindent\(10_{77}\text{: \  }2-7 m+14 m^2-17 m^3+14 m^4-7 m^5+2 m^6\)

\noindent\(10_{78}\text{: \  }1-7 m+16 m^2-21 m^3+16 m^4-7 m^5+m^6\)

\noindent\(10_{79}\text{: \  }1-3 m+7 m^2-12 m^3+15 m^4-12 m^5+7 m^6-3 m^7+m^8\)

\noindent\(10_{80}\text{: \  }3-9 m+15 m^2-17 m^3+15 m^4-9 m^5+3 m^6\)

\noindent\(10_{81}\text{: \  }1-8 m+20 m^2-27 m^3+20 m^4-8 m^5+m^6\)

\noindent\(10_{82}\text{: \  }1-4 m+8 m^2-12 m^3+13 m^4-12 m^5+8 m^6-4 m^7+m^8\)

\noindent\(10_{83}\text{: \  }2-9 m+19 m^2-23 m^3+19 m^4-9 m^5+2 m^6\)

\noindent\(10_{84}\text{: \  }2-9 m+20 m^2-25 m^3+20 m^4-9 m^5+2 m^6\)

\noindent\(10_{85}\text{: \  }1-4 m+8 m^2-10 m^3+11 m^4-10 m^5+8 m^6-4 m^7+m^8\)

\noindent\(10_{86}\text{: \  }2-9 m+19 m^2-25 m^3+19 m^4-9 m^5+2 m^6\)

\noindent\(10_{87}\text{: \  }2-9 m+18 m^2-23 m^3+18 m^4-9 m^5+2 m^6\)

\noindent\(10_{88}\text{: \  }1-8 m+24 m^2-35 m^3+24 m^4-8 m^5+m^6\)

\noindent\(10_{89}\text{: \  }1-8 m+24 m^2-33 m^3+24 m^4-8 m^5+m^6\)

\noindent\(10_{90}\text{: \  }2-8 m+17 m^2-23 m^3+17 m^4-8 m^5+2 m^6\)

\noindent\(10_{91}\text{: \  }1-4 m+9 m^2-14 m^3+17 m^4-14 m^5+9 m^6-4 m^7+m^8\)

\noindent\(10_{92}\text{: \  }2-10 m+20 m^2-25 m^3+20 m^4-10 m^5+2 m^6\)

\noindent\(10_{93}\text{: \  }2-8 m+15 m^2-17 m^3+15 m^4-8 m^5+2 m^6\)

\noindent\(10_{94}\text{: \  }1-4 m+9 m^2-14 m^3+15 m^4-14 m^5+9 m^6-4 m^7+m^8\)

\noindent\(10_{95}\text{: \  }2-9 m+21 m^2-27 m^3+21 m^4-9 m^5+2 m^6\)

\noindent\(10_{96}\text{: \  }1-7 m+22 m^2-33 m^3+22 m^4-7 m^5+m^6\)

\noindent\(10_{97}\text{: \  }5-22 m+33 m^2-22 m^3+5 m^4\)

\noindent\(10_{100}\text{: \  }1-4 m+9 m^2-12 m^3+13 m^4-12 m^5+9 m^6-4 m^7+m^8\)

\noindent\(10_{102}\text{: \  }2-8 m+16 m^2-21 m^3+16 m^4-8 m^5+2 m^6\)

\noindent\(10_{104}\text{: \  }1-4 m+9 m^2-15 m^3+19 m^4-15 m^5+9 m^6-4 m^7+m^8\)

\noindent\(10_{105}\text{: \  }1-8 m+22 m^2-29 m^3+22 m^4-8 m^5+m^6\)

\noindent\(10_{107}\text{: \  }1-8 m+22 m^2-31 m^3+22 m^4-8 m^5+m^6\)

\noindent\(10_{109}\text{: \  }1-4 m+10 m^2-17 m^3+21 m^4-17 m^5+10 m^6-4 m^7+m^8\)

\noindent\(10_{110}\text{: \  }1-8 m+20 m^2-25 m^3+20 m^4-8 m^5+m^6\)

\noindent\(10_{111}\text{: \  }2-9 m+17 m^2-21 m^3+17 m^4-9 m^5+2 m^6\)

\noindent\(10_{112}\text{: \  }1-5 m+11 m^2-17 m^3+19 m^4-17 m^5+11 m^6-5 m^7+m^8\)

\noindent\(10_{113}\text{: \  }2-11 m+26 m^2-33 m^3+26 m^4-11 m^5+2 m^6\)

\noindent\(10_{114}\text{: \  }2-10 m+21 m^2-27 m^3+21 m^4-10 m^5+2 m^6\)

\noindent\(10_{116}\text{: \  }1-5 m+12 m^2-19 m^3+21 m^4-19 m^5+12 m^6-5 m^7+m^8\)

\noindent\(10_{117}\text{: \  }2-10 m+24 m^2-31 m^3+24 m^4-10 m^5+2 m^6\)

\noindent\(10_{118}\text{: \  }1-5 m+12 m^2-19 m^3+23 m^4-19 m^5+12 m^6-5 m^7+m^8\)

\noindent\(10_{119}\text{: \  }2-10 m+23 m^2-31 m^3+23 m^4-10 m^5+2 m^6\)

\noindent\(10_{120}\text{: \  }8-26 m+37 m^2-26 m^3+8 m^4\)

\noindent\(10_{121}\text{: \  }2-11 m+27 m^2-35 m^3+27 m^4-11 m^5+2 m^6\)

\noindent\(10_{124}\text{: \  }1-m+m^3-m^4+m^5-m^7+m^8\)

\noindent\(10_{125}\text{: \  }1-2 m+2 m^2-m^3+2 m^4-2 m^5+m^6\)

\noindent\(10_{126}\text{: \  }1-2 m+4 m^2-5 m^3+4 m^4-2 m^5+m^6\)

\noindent\(10_{127}\text{: \  }1-4 m+6 m^2-7 m^3+6 m^4-4 m^5+m^6\)

\noindent\(10_{128}\text{: \  }2-3 m+m^2+m^3+m^4-3 m^5+2 m^6\)

\noindent\(10_{129}\text{: \  }2-6 m+9 m^2-6 m^3+2 m^4\)

\noindent\(10_{130}\text{: \  }2-4 m+5 m^2-4 m^3+2 m^4\)

\noindent\(10_{131}\text{: \  }2-8 m+11 m^2-8 m^3+2 m^4\)

\noindent\(10_{132}\text{: \  }1-m+m^2-m^3+m^4\)

\noindent\(10_{133}\text{: \  }1-5 m+7 m^2-5 m^3+m^4\)

\noindent\(10_{134}\text{: \  }2-4 m+4 m^2-3 m^3+4 m^4-4 m^5+2 m^6\)

\noindent\(10_{135}\text{: \  }3-9 m+13 m^2-9 m^3+3 m^4\)

\noindent\(10_{136}\text{: \  }1-4 m+5 m^2-4 m^3+m^4\)

\noindent\(10_{137}\text{: \  }1-6 m+11 m^2-6 m^3+m^4\)

\noindent\(10_{138}\text{: \  }1-5 m+8 m^2-7 m^3+8 m^4-5 m^5+m^6\)

\noindent\(10_{139}\text{: \  } 1-m+2 m^3-3 m^4+2 m^5-m^7+m^8\)

\noindent\(10_{141}\text{: \  }1-3 m+4 m^2-5 m^3+4 m^4-3 m^5+m^6\)

\noindent\(10_{143}\text{: \  }1-3 m+6 m^2-7 m^3+6 m^4-3 m^5+m^6\)

\noindent\(10_{145}\text{: \  }1+m-3m^2+m^3+m^4\) 

\noindent\(10_{146}\text{: \  }2-8 m+13 m^2-8 m^3+2 m^4\)

\noindent\(10_{148}\text{: \  }1-3 m+7 m^2-9 m^3+7 m^4-3 m^5+m^6\)

\noindent\(10_{149}\text{: \  }1-5 m+9 m^2-11 m^3+9 m^4-5 m^5+m^6\)

\noindent\(10_{150}\text{: \  }1-4 m+6 m^2-7 m^3+6 m^4-4 m^5+m^6\)

\noindent\(10_{151}\text{: \  }1-4 m+10 m^2-13 m^3+10 m^4-4 m^5+m^6\)

\noindent\(10_{152}\text{: \  }1-m-m^2+4 m^3-5 m^4+4 m^5-m^6-m^7+m^8\)

\noindent\(10_{153}\text{: \  }1-m-m^2+3 m^3-m^4-m^5+m^6\)

\noindent\(10_{154}\text{: \  }1-4 m^2+7 m^3-4 m^4+m^6\)

\noindent\(10_{156}\text{: \  }1-4 m+8 m^2-9 m^3+8 m^4-4 m^5+m^6\)

\noindent\(10_{158}\text{: \  }1-4 m+10 m^2-15 m^3+10 m^4-4 m^5+m^6\)

\noindent\(10_{159}\text{: \  }1-4 m+9 m^2-11 m^3+9 m^4-4 m^5+m^6\)

\noindent\(10_{161}\text{: \  }1-2 m^2+3 m^3-2 m^4+m^6\)

\noindent\(10_{163}\text{: \  }3-9 m+11 m^2-9 m^3+3 m^4\)

\noindent\(10_{165}\text{: \  }3-11 m+17 m^2-11 m^3+3 m^4\)

\noindent\(10_{166}\text{: \  }2-10 m+15 m^2-10 m^3+2 m^4\)

\newpage
 \nocite{*} 
\bibliography{colourings}
\bibliographystyle{plain}
\end{document}